\newcommand{\length}{\operatorname{\lambda}}
\newcommand{\mf}[1]{\mathfrak #1}
\newcommand{\ehk}{\operatorname {e_{HK}}}
\newcommand{\eh}{\operatorname {e}}
\newcommand{\En}[1]{{\mathcal A}_{#1}}
\DeclareMathOperator{\swap}{swap}
\DeclareMathOperator{\vol}{vol}
\DeclareMathOperator{\chr}{char}
\DeclareMathOperator{\modt}{Mod_T}
\newcommand{\cotimes}[1]{\mathbin{\widehat{\otimes_{#1}}}}
\theoremstyle{plain}
\newtheorem{conjecture}{Conjecture}
\newtheorem{theorem}{Theorem}[section]
\newtheorem{lemma}[theorem]{Lemma}
\newtheorem{proposition}[theorem]{Proposition}
\newtheorem{corollary}[theorem]{Corollary}
\newtheorem{claim}{Claim}[theorem]
\newtheorem{procedure}{Procedure}
\newtheorem*{theorem*}{Theorem}
\theoremstyle{definition}
\newtheorem{definition}[theorem]{Definition}
\newtheorem*{definition*}{Definition}
\newtheorem{example}[theorem]{Example}
\newtheorem{remark}[theorem]{Remark}
\begin{document}

\title[Hilbert--Kunz multiplicity and Ehrhart theory]{Hilbert--Kunz multiplicity of quadrics via Ehrhart theory}

\author{Igor Pak}
\address{Department of Mathematics, UCLA, Los Angeles, CA, 90095, USA} 

\author{Boris Shapiro}
\address{Department of Mathematics, Stockholm University, SE-106 91 Stockholm, Sweden}

\author{Ilya Smirnov}
\address{BCAM -- Basque Center for Applied Mathematics, Mazarredo 14, 48009 Bilbao, Spain \quad and \quad
IKERBASQUE, Basque Foundation for Science, Plaza Euskadi 5, 48009 Bilbao, Spain}

\author{Ken-ichi Yoshida}
\address{Department of Mathematics, College of Humanities and Sciences, Nihon University, 3-25-40 Sakurajosui, Setagaya-ku, Tokyo 156-8550, Japan}

\subjclass[2020]{Primary: 13D40,	13A35, 52B20; Secondary: 14B05, 05E40, 05A05}

\begin{abstract} 
We show that the Hilbert--Kunz multiplicity $h_{p, d}$ of the $d$-dimensional non-degenerate quadric hypersurface of characteristic $p > 2$  is a rational function of $p$ composed from the Ehrhart polynomials of integer polytopes.
In consequence, we recover a result of Trivedi on monotonicity of $h_{p, d}$ for $p \gg 0$, we recover and explain the Gessel--Monsky formula for the limit $\lim_{p \to \infty} h_{p, d}$, and prove that $h_{p, d}$ is a decreasing function of $d$ for $p$ fixed. 
 \end{abstract}

\maketitle

\section{Introduction}
Hilbert--Kunz multiplicity is a multiplicity theory native to positive characteristic, one of the numerical invariants defined via the iterates of the Frobenius endomorphism.

\begin{definition*}[Monsky]
Let $(R, \mf m)$ be a local ring of positive characteristic $p > 0$. 
Then the \emph{Hilbert--Kunz multiplicity} of $R$ is defined as the limit
\[
\ehk(R) := \lim_{e \to \infty} \frac{\length (R/\mf m^{[p^e]})}{p^{e \dim R}},
\]
where $\mf m^{[p^e]}$ is the ideal generated by all $p^e$th powers of elements in $\mf m$. 
\end{definition*}

The ideal $\mf m^{[p^e]}$ is called the \emph{Frobenius power} of $\mf m$; this terminology indicates the analogy to Samuel's definition of the classical Hilbert--Samuel multiplicity. Hilbert--Kunz multiplicity measures the failure of flatness of the Frobenius endomorphism. The theory originates in the work of Kunz \cite{Kunz1, Kunz2}: \cite{Kunz1} shows that flatness of Frobenius characterizes regular rings, while in \cite{Kunz2} he studies the sequence whose limit is the Hilbert--Kunz multiplicity to provide a quantitative measure of this failure. The invariant was defined by Monsky \cite{Monsky}, whose motivation came from the Iwasawa theory. 

From its very origins, the development of the Hilbert--Kunz theory was motivated by its use as a tool of understanding singularities and its similarity with the Hilbert--Samuel theory. For example, the celebrated theorem of Nagata (\cite{Nagata}) asserts that a local ring $R$ is regular if and only if it is unmixed\footnote{The unmixed condition means that the completion $\hat{R}$ has no lower-dimensional components -- lower-dimensional components do not contribute to the limit.}
and its multiplicity is $1$. 
In parallel, Watanabe and Yoshida show in \cite{WatanabeYoshida} that a local ring $R$ of characteristic $p> 0$ is regular if and only if it is unmixed and $\ehk(R) = 1$. 

However, there is a major difference between two theories of multiplicity: 
Hilbert--Kunz multiplicity need not be an integer; in fact, it may be even irrational (\cite{Brenner}). Overall, little is known about the set of values of Hilbert--Kunz multiplicity. A very natural question was raised by Blickle and Enescu in \cite{BlickleEnescu}: what is 
\[\inf \left \{R \mid \ehk(R) > 1, \chr R = p, \dim R = d \right \}?\]
In other words, they asked for the best lower bound on the Hilbert--Kunz multiplicity of an unmixed singular ring. 
Blickle and Enescu provided a lower bound $\ehk(R) \geq 1 + \frac{1}{d!p^d}$, 
showing that $1$ is not a limit point in the set of Hilbert--Kunz multiplicities. Another lower bound was given in \cite{CDHZ}.

In a different direction, Watanabe and Yoshida \cite{WatanabeYoshida3d} conjectured what singular rings minimize Hilbert--Kunz multiplicity. 

\begin{conjecture}\label{conj WY 1}
Let $p > 2$ be a prime number and define the simple $(A_1)$-singularity\footnote{As discussed in the unpublished notes of the last-named author \cite{Yoshida} the equation of $A_{2,d}$ should be different, see also \cite{JNSWY, CastilloRey}.} by the equation
\[
A_{p,d} := \mathbb{F}_{p} [[x_0, \ldots, x_d]]/(x_0^2 + \cdots + x_d^2).
\]
Let $(R,\mf m)$ be a local ring of dimension $d \geq 1$ and characteristic $p > 0$, and set $k = R/\mf m$. 
Then
\begin{enumerate}
\item if $\ehk(R) \neq 1$, then $\ehk(R) \geq \ehk(A_{p,d})$,  
\item if $R$ is unmixed and $k$ is algebraically closed, then 
$\ehk(R) = \ehk(A_{p,d})$ if and only if
\[
\widehat{R} \cong k[[x_0, \ldots, x_d]]/(x_0^2 + \cdots + x_d^2)
= A_{p,d} \cotimes{\mathbb{F}_p} k. 
\]
\end{enumerate}
\end{conjecture}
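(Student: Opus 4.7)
The plan is to reduce to the complete, equidimensional case with algebraically closed residue field; completion preserves $\ehk$ under the unmixed hypothesis, and a faithfully flat extension of the residue field leaves $\ehk$ unchanged. One then stratifies by the classical Hilbert--Samuel multiplicity $\eh(R)$. If $\eh(R)=1$, Nagata's theorem forces $R$ to be regular, so $\ehk(R)=1$, contradicting the hypothesis $\ehk(R)\neq 1$. If $\eh(R)\geq 3$, one should derive a direct lower bound of the form $\ehk(R)\geq \eh(R)\cdot c(p,d)$ that already exceeds $\ehk(A_{p,d})$, exploiting the inclusion $\mf m^{[p^e]}\subseteq \mf m^{p^e}$ together with Lech-type inequalities between $\ehk$ and $\eh$.

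The crucial case is $\eh(R)=2$, where $R$ is a hypersurface defined by a quadratic form. Passing to the associated graded ring and recompleting, one may write $R=k[[x_0,\ldots,x_n]]/(q)$ for a quadratic form $q$ of rank $r\leq n+1$. The goal is to show $\ehk(R)\geq \ehk(A_{p,d})$ with equality only when $q$ is non-degenerate and $n=d$. The explicit Ehrhart-theoretic formula for $\ehk(A_{p,d})$ proved in this paper, together with its consequence that $d\mapsto \ehk(A_{p,d})$ is strictly decreasing, rules out non-degenerate quadrics in more variables. Degenerate quadrics are then handled by writing $q$ as a non-degenerate form in $r$ variables together with extra regular parameters and controlling how $\ehk$ behaves under this enlargement, again appealing to the Ehrhart description.

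For part (2), after the above arguments force $\eh(R)=2$ and the tangent cone $\operatorname{gr}_{\mf m}R$ to be the non-degenerate quadric $A_{p,d}\cotimes{\mathbb{F}_p} k$, one invokes the classical fact that a complete unmixed local ring whose associated graded ring is a non-degenerate quadric of matching multiplicity is itself analytically isomorphic to it; this yields the description $\widehat{R}\cong A_{p,d}\cotimes{\mathbb{F}_p} k$ of Conjecture~\ref{conj WY 1}(2).

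The main obstacle is the $\eh(R)=2$ stratum in arbitrary dimension: one needs not merely an exact formula for $\ehk(A_{p,d})$ but tight enough bounds on $\ehk$ for every quadric hypersurface in order to compare them systematically. The Ehrhart machinery developed in this paper supplies the exact formula on the minimizer side, but translating it into the required minimality statement across all embedding dimensions and ranks of $q$ -- and combining this with the $\eh(R)\geq 3$ bound so that the two thresholds meet cleanly -- is the delicate step, and appears to be the central reason the conjecture remains open in its full generality.
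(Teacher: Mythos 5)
The statement you were given is Conjecture~\ref{conj WY 1}, the Watanabe--Yoshida conjecture, which this paper does \emph{not} prove: it is explicitly presented as an open problem, known only in dimensions at most~$8$ and for complete intersections. The paper's sole contribution towards the conjecture itself is the final corollary of Section~3, which reduces it to the case of isolated singularities by localizing at a non-maximal minimal prime of the singular locus and invoking Corollary~\ref{cor: monotone function}; this is a different reduction from your stratification by Hilbert--Samuel multiplicity. So there is no ``paper's own proof'' to compare against.

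Beyond that mismatch, several steps in your sketch would not go through. For the stratum $\eh(R)\geq 3$ there is no known lower bound of the form $\ehk(R)\geq \eh(R)\cdot c(p,d)$ strong enough to exceed $\ehk(A_{p,d})$: the elementary bounds coming from $\mf m^{[p^e]}\subseteq \mf m^{p^e}$ or Lech-type inequalities are far too weak, since $\ehk(A_{p,d})$ is barely above $1$. In the multiplicity-$2$ stratum, ``passing to the associated graded ring and recompleting'' is not legitimate, because $\ehk$ is not preserved under passage to the tangent cone; you cannot reduce to a homogeneous quadric this way. Moreover an unmixed local ring of multiplicity $2$ need not be Cohen--Macaulay, so the Abhyankar-type inequality implicitly used to conclude that $R$ is a hypersurface is not available. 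Finally, the paper's monotonicity result concerns only the non-degenerate quadric $A_{p,d}$ in its minimal embedding dimension as $d$ varies; it does not by itself control degenerate quadrics or quadrics in excess embedding dimension, which is exactly the delicate comparison you flag as missing. Your closing paragraph is candid that the argument is incomplete; the honest summary is that this remains a conjecture, and the paper supplies only exact information about the conjectured minimizer together with the isolated-singularity reduction, not a proof.
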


Conjecture~\ref{conj WY 1} is now known in dimensions at most $8$ (\cite{WatanabeYoshida3d, AberbachEnescu1, AberbachEnescu2, CoxSteibAberbach, CastilloRey}) and for complete 
intersections (\cite{EnescuShimomoto, CastilloRey}). 

An important subtlety is that Conjecture~\ref{conj WY 1} does not directly provide a numerical lower bound, as a closed formula for $\ehk(A_{p,d})$ is not known. In her thesis \cite{Han}, Han developed an algorithm computing the Hilbert--Kunz multiplicity of diagonal hypersurfaces, which was published in a simplified form in \cite{HanMonsky}. 
The algorithm allows to compute $\ehk(A_{p,d})$ for specific values of $p$ and $d$, but a closed formula was only presented for $d \leq 4$. However, Gessel and Monsky \cite{GesselMonsky} used the algorithm to find the asymptotic behavior of $\ehk(A_{p,d})$ as $p$ grows large:
\[
\lim_{p \to \infty} \ehk(A_{p,d}) = 1 + \frac{\En{d}}{d!}
\]
where $\En{d}$ are the Euler (zigzag) numbers given by the Taylor--Maclaurin series
\[
\sec x + \tan x = \sum \frac{\En{d}}{d!} x^d.
\] 
This result motivated the following 
characteristic-free bound.

\begin{conjecture}[Watanabe--Yoshida, \cite{WatanabeYoshida3d}]\label{conj WY 2}
For any dimension $d \geq 1$ and characteristic $p > 2$, one has 
$\ehk(A_{p,d}) \geq 1 + \En{d}/d!$. 
\end{conjecture}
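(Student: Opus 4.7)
The plan is to use the paper's main theorem, which expresses
\[
\ehk(A_{p,d}) \;=\; F_d(p),
\]
where $F_d$ is an explicit rational function of $p$ built from the Ehrhart polynomials of certain integer polytopes associated with the diagonal quadric. Since each Ehrhart polynomial is polynomial in its argument, $F_d$ admits an asymptotic expansion $\alpha_0 + \alpha_1/p + \alpha_2/p^2 + \cdots$ as $p \to \infty$, and the Gessel--Monsky computation (recovered within the paper) forces $\alpha_0 = 1 + \En{d}/d!$. Conjecture~\ref{conj WY 2} is therefore equivalent to the assertion that the error $F_d(p) - \alpha_0 \geq 0$ for every odd prime $p \geq 3$.

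I would attack this error inequality in two stages. The first is purely computational: extract an explicit form of $F_d(p) - \alpha_0$ from the Ehrhart-polynomial formula, and after clearing denominators turn it into a polynomial inequality $G_d(p) \geq 0$. The second and substantive step is to prove this non-negativity. Two complementary routes present themselves. One is to identify $G_d$, up to a positive factor, with a manifestly non-negative Ehrhart-theoretic quantity, for example an $h^*$-coefficient of an auxiliary lattice polytope, or a lattice-point count for a difference of nested lattice polytopes, invoking Stanley's non-negativity of $h^*$-vectors or the monotonicity of Ehrhart functions under inclusion. The other is to prove monotonicity of $F_d(p)$ in $p$ directly: if $F_d$ is decreasing on $[3,\infty)$, then its values dominate the limit $\alpha_0$, and the bound follows instantly.

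The main obstacle will be sign control. Ehrhart coefficients need not be non-negative in general, and cancellations inside $F_d$ obscure the sign of $F_d(p)-\alpha_0$ in the raw formula. Success will hinge on exploiting the hyperoctahedral symmetry of the polytopes arising from the quadric $x_0^2+\cdots+x_d^2$, and on pinpointing the correct positivity statement in Ehrhart theory to apply; once $G_d$ is matched to a concrete combinatorial invariant of a symmetric polytope, non-negativity should follow. The known low-dimensional cases $d \leq 8$ and the complete-intersection cases provide sanity checks, while the dimension-monotonicity of $\ehk(A_{p,d})$ proved elsewhere in the paper offers a potential inductive anchor on $d$, reducing the general statement to a bound on the decrement $\ehk(A_{p,d})-\ehk(A_{p,d+1})$ that is itself controlled by Ehrhart data.
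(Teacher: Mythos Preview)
This is not a proof but a research plan, and the paper itself does \emph{not} prove Conjecture~\ref{conj WY 2}; it is stated as a conjecture, with the remark that Trivedi proved it for $p > d-1$ and Meng announced a full solution elsewhere. So there is no proof in the paper to compare against; the paper only reformulates an \emph{asymptotic} version of the conjecture in terms of the swap statistic (Section on the asymptotic Watanabe--Yoshida conjecture) and leaves that reformulation open after numerical verification for $n \le 600$.

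Your two proposed routes each collapse to a known open problem. Route (b), proving that $p \mapsto \ehk(A_{p,d})$ is decreasing, is precisely the conjecture of Yoshida mentioned in the introduction; the paper verifies it only computationally for $d \le 30$, and Trivedi's work gives it only for $p$ large. You would be trading one conjecture for a stronger one. Route (a), matching the cleared-denominator error $G_d$ to a manifestly non-negative Ehrhart quantity, is essentially what the paper attempts: via Corollary~\ref{c Ehrhart expand} the asymptotic statement reduces to the explicit inequality
\[
\sum_{m=0}^{d-2}\binom{m}{2}s_d(m) \;>\; \En{d}\,\frac{3d^2-17d+25}{24},
\]
and the authors could not identify the left side with anything for which Stanley-type non-negativity applies. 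Note too that the denominator of $F_d$ involves the Ehrhart polynomial of the \emph{extended} Fibonacci polytope, for which no $h^*$-description analogous to Coons--Sullivant is known; the paper explicitly flags this (``little is known about the Ehrhart polynomial of the extended Fibonacci polytope''). Finally, the dimension-monotonicity $\ehk(A_{p,d}) > \ehk(A_{p,d+1})$ goes the wrong way for the induction you sketch: it tells you the sequence decreases in $d$, not that each term stays above its own limit $1+\En{d}/d!$, since the target moves with $d$.

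In short, the genuine gap is that neither proposed mechanism for sign control is currently available, and the paper documents exactly why.
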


Recently, in \cite{Trivedi}, Trivedi showed that this conjecture is true for any $d$ and $p > d-1$.
Even more recently, Meng announced a complete\footnote{Meng assumes that $p > 2$, but the conjecture holds for $p = 2$ by a direct computation in \cite{CastilloRey} if $A_{2,d}$ is appropriately defined.} solution in \cite{Meng}.
Combined with the result of Enescu and Shimomoto \cite{EnescuShimomoto}
there is now a good characteristic-free bound on Hilbert--Kunz multiplicity 
of complete intersections, but for general local rings we only have
a much weaker characteristic-free bound observed in \cite{AberbachEnescu1}: 
\[\ehk(R) \geq 1 + \frac{1}{d(d!(d-1) + 1)^d}.\]
Note that it is known that $\En{d}/d! \sim 2 (2/\pi)^{d+1}$.

It is believed that the bound in Conjecture~\ref{conj WY 2} 
is not sharp. In fact, the last named author conjectured in the unpublished note \cite{Yoshida}
that $\ehk(A_{p,d})$ is a strictly decreasing sequence in $p$ for a fixed $d$.
This is known to be true in small dimensions, where an explicit formula for $\ehk(A_{p,d})$ can be computed, and for $p$ sufficiently large (depending on $d$), as a byproduct of Trivedi's approach to Conjecture~\ref{conj WY 2} in \cite{Trivedi}.

\subsection*{Results}
This note provides an approach to Conjecture~\ref{conj WY 2} that is drastically different 
from \cite{Trivedi}. Trivedi's work is quite intricate, it builds on Achinger's computation of Frobenius pushforwards of vector bundles on quadrics in order to study the \emph{Hilbert--Kunz density functions} (a theory developed in \cite{TrivediDensity}) and then treats the densities by analytic tools. Meng's improvement in \cite{Meng} results from even more sophisticated analytic tools. 
On the other hand, the approach of this paper is more elementary, based on the algorithm of Han and Monsky \cite{HanMonsky}. 
For quadrics the algorithm can be interpreted using linear algebra (\cite{Yoshida}): for the families of $(2n + 1) \times (2n+1)$ square matrices
\[
T_n = 
\begin{bNiceMatrix}[nullify-dots]
2 	    &	 \Cdots   &   2	 & 1   	  & 0  	&  \Cdots & 0  \\
\Vdots &	 \Iddots  & \Iddots& \Vdots & \Ddots&  \Ddots & \Vdots  \\
2 	    &	 \Iddots  &           &    	  &         &  \Ddots & 0  \\
1	    &	\Cdots  	& 		 & 1   	  &\Cdots&  		   & 1  \\
0	    &	  	       & 		 &\Vdots  &   	&  		   & 2  \\
\Vdots &	 \Ddots  	 & \Ddots&    	  &\Iddots&\Iddots   &\Vdots  \\
0	    &	\Cdots  	 & 0		 &    1	  & 2  	&\Cdots    & 2  \\
\end{bNiceMatrix} \text { and }
N_n = 
\begin{bNiceMatrix}[nullify-dots]
0 	    &	 \Cdots   &   0	 & 1   	  & 0  	&  \Cdots & 0  \\
\Vdots &	 \Iddots  & \Iddots& \Vdots & \Ddots&  \Ddots & \Vdots  \\
0 	    &	 \Iddots  &           &    	  &         &  \Ddots & 0  \\
1	    &	\Cdots  	& 		 & 1   	  &\Cdots&  		   & 1  \\
0	    &	  	       & 		 &\Vdots  &   	&  		   & 0  \\
\Vdots &	 \Ddots  	 & \Ddots&    	  &\Iddots&\Iddots   &\Vdots  \\
0	    &	\Cdots  	 & 0		 &    1	  & 0  	&\Cdots    & 0  \\
\end{bNiceMatrix}.
\]
one computes (Corollary~\ref{cor Matrix Form}) that
\[
\ehk(A_{p,d}) = 1 + \frac{[T_{\lfloor p/2 \rfloor}^{d+1}]_{(1, 1)} - p^d}{p^d - [N_{\lfloor p/2 \rfloor}^{d+1}]_{(1, 1)}}, 
\]
where $[M]_{(1,1)}$ denotes the $(1,1)$–entry of a matrix $M$. 
Using the special shape of the matrices $T_n$ and $N_n$, we prove the following.

\begin{theorem*}[{Corollaries~\ref{cor main Ehrhart},~\ref{cor: rational function},~\ref{cor convergence rate},~\ref{cor: monotone function}}]
Let $F_d(n)$ and $E_d(n)$ be the Ehrhart polynomials of the $d$-dimensional \emph{Fibonacci} and \emph{extended Fibonacci} polytopes (Definition~\ref{def Fibonacci}). Then for all $p > 2$, the Hilbert–Kunz multiplicity of $A_{p,d}$ is given by 
\[
\ehk(A_{p,d}) = 1 + \frac{2^d F_d \left (\frac{p-3}{2} \right)}{p^d - E_{d-2} \left(\frac{p-1}{2} \right)}.
\]
As a consequence, for any $d \geq 1$ we have
\begin{enumerate}
\item there exist polynomials $f, g \in \mathbb{Q}[x]$ of degree $\lfloor d/2 \rfloor - 1$ such that 
$\ehk(A_{p,d}) = \frac{f(p^2)}{g(p^2)}$ for all $p > 2$;
\item (Gessel--Monsky, \cite{GesselMonsky}) $\lim_{p \to \infty} \ehk(A_{p,d}) = 1 + \En{d}/d!$;
\item (Trivedi, \cite{Trivedi}) the function $p \mapsto \ehk(A_{p,d})$ is eventually decreasing for $d \geq 4$ ($\ehk (A_{p,d})$ is constant for $d \leq 3$); 
\item $\ehk(A_{p,d}) > \ehk(A_{p,d+1})$ for fixed $p > 2$, i.e., 
the function $d \mapsto \ehk(A_{p,d})$ is decreasing. 
\end{enumerate}
\end{theorem*}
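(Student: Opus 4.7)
My starting point is the matrix formula of Corollary~\ref{cor Matrix Form}: setting $n=(p-1)/2$, everything reduces to understanding $[T_n^{d+1}]_{(1,1)}$ and $[N_n^{d+1}]_{(1,1)}$. I would expand each entry as a weighted walk sum
\[
[M^{d+1}]_{(1,1)} \;=\; \sum_{v_1,\ldots,v_d} \prod_{k=0}^{d} M_{v_k,v_{k+1}}, \qquad v_0 = v_{d+1} = 1,
\]
and analyze the support of $M \in \{T_n, N_n\}$: a rigid ``cross plus anti-diagonals'' shape consisting of the middle row, the middle column, and the two anti-diagonals $i+j \in \{n+2,3n+2\}$, together with (only for $T_n$) two extreme triangles $i+j \leq n+1$ and $i+j \geq 3n+3$ whose entries equal $2$. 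The constraint $v_0=1$ forces $v_1=n+1$, and symmetrically $v_d=n+1$, so the intermediate vertices $v_2,\ldots,v_{d-1}$ are free subject to each consecutive pair belonging to the support. Each contribution is a pure power of~$2$, with the exponent counting edges landing in a triangle.

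\textbf{From walks to Ehrhart polynomials.} I would group the walks by the ``type'' of each edge (which component of the support it uses), and observe that after fixing the type sequence, the admissible walks are parameterized by an integer tuple $(a_j)$ with $0 \leq a_j \leq n$ subject to linear constraints of the form $a_j + a_{j+1} \leq n$ (for anti-diagonal edges) or $a_j \leq n$ (for middle-cross edges). These constraints cut out a rational polytope whose lattice-point count is an Ehrhart polynomial in $n$. Summing over type sequences, the dominant contribution to $[T_n^{d+1}]_{(1,1)}$ comes from walks using the corner triangles maximally, contributing a total factor of $2^d$ times the Ehrhart value of the Fibonacci polytope $F_d$ (Definition~\ref{def Fibonacci}); the subtraction by $p^d$ removes the ``saturated'' triangle contribution. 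For $[N_n^{d+1}]_{(1,1)}$, with no triangle edges available, only cross-and-anti-diagonal walks survive, matching the Ehrhart polynomial of the extended Fibonacci polytope $E_{d-2}$ (the index shift comes from the two forced endpoint edges $v_0 \to v_1$ and $v_d \to v_{d+1}$). Substituting $n = (p-3)/2$ and $n=(p-1)/2$ respectively in the two Ehrhart polynomials yields the displayed identity.

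\textbf{Consequences.} Part~(1) follows at once, since $F_d$ and $E_{d-2}$ are polynomials of degrees $d$ and $d-2$ in their argument by Ehrhart's theorem, and the substitutions $n=(p-3)/2$, $n=(p-1)/2$ preserve polynomiality. For part~(2), the leading-order expansion gives numerator $\sim 2^d \vol(F_d) \cdot ((p-3)/2)^d = \vol(F_d) \cdot p^d + O(p^{d-1})$ and denominator $p^d - O(p^{d-2}) \sim p^d$, so the limit equals $1+\vol(F_d)$; the identity $\vol(F_d) = \En{d}/d!$ then reproduces the Gessel--Monsky formula, and can be established via the classical bijection between the lattice points of $n F_d$ and up-down sequences, whose generating function is $\sec x + \tan x$. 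For part~(3), the strict inequality $\ehk(A_{p,d}) > \ehk(A_{p,d+1})$ does not reduce to a purely volumetric comparison (volumes go in the right direction, but with the $2^d$ vs.\ $2^{d+1}$ prefactor and the denominator shift one needs to control the lower-order Ehrhart coefficients), so I would prove it by clearing denominators and showing the resulting polynomial inequality holds for every odd $p > 2$, using an explicit combinatorial injection on the walk sets of dimensions $d$ and $d+1$.

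\textbf{Main obstacle.} The hardest step is the precise identification of the Fibonacci and extended Fibonacci polytopes in the walk decomposition so that the prefactor $2^d$ and the dimensional shift $E_{d-2}$ (not $E_d$) appear cleanly; this requires careful bookkeeping of how the two anti-diagonal constraints and the middle-cross constraints interact along the walk. Deriving part~(3) also presents a genuine difficulty: having a closed formula in terms of Ehrhart polynomials is not, in itself, enough to conclude monotonicity, so one must either find a direct combinatorial injection between the walk sets in dimensions $d$ and $d+1$, or exploit a structural recursion between $F_d$ and $F_{d+1}$ (resp.\ $E_{d-2}$ and $E_{d-1}$) that is compatible with the fraction in the formula.
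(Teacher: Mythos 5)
Your starting point (Corollary~\ref{cor Matrix Form}), the walk expansion of $[T_n^{d+1}]_{(1,1)}$, the identification of the $N_n$ walks with lattice points of the dilated extended Fibonacci polytope (with $i_1 = i_d = 0$ forced, accounting for the shift to $E_{d-2}$), and the way consequences (1)--(2) fall out are all aligned with the paper's route. However, there is a genuine gap at the heart of the argument, and part (3) is not fully specified.

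The crux is the exact identity $[T_n^{d+1}]_{(1,1)} = (2n+1)^d + 2^d\,|(n-1)F_d|$ (Theorem~\ref{thm Fibonacci}), and your sketch misattributes where the $p^d$ comes from. You write that ``the subtraction by $p^d$ removes the saturated triangle contribution,'' but the walks whose every edge lies in a corner triangle contribute $2^{d+1}|(n-1)F_d|$, not $p^d$; and $p^d$ is already present in Corollary~\ref{cor Matrix Form} independently of the walk analysis. What the walk analysis must deliver --- and what your ``group by edge type, count lattice points per type'' plan does not obviously deliver --- is the clean identity above. The paper obtains it by a different mechanism: Procedure~$P$ turns each nonzero weighted walk in $T_n$ (contributing $2^\ell$ with $\ell \le d$) into exactly $2^\ell$ distinct walks in the all-ones matrix $\mathbb I$ that meet the middle rhombus $\mathcal R$, and this map is shown to be a bijection onto all such $\mathbb I$-walks. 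Combined with Lemma~\ref{lemma: first Fibonacci} (walks in $\mathbb I$ that avoid $\mathcal R$ number $2^d |(n-1)F_d|$), one gets $(2n+1)^d = [T_n^{d+1}]_{(1,1)} - 2^{d+1}|(n-1)F_d| + 2^d|(n-1)F_d|$. Your type-sequence decomposition could in principle be pushed through, but as written it produces a sum over types rather than the closed form, and you do not explain how that sum telescopes; you would essentially be forced to rediscover the bijection.

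For part (3), you rightly note that volumes alone do not suffice. The missing ingredient is the pair of elementary lattice-point inequalities $|aF_{d+1}| < (a+1)\,|aF_d|$ and $|aE_{d+1}| < (2a+1)\,|aE_d|$ (appending a coordinate $x_{d+1}$ constrained by $x_d + x_{d+1} \le a$, resp.\ $|x_d|+|x_{d+1}| \le a$, admits at most $a+1$, resp.\ $2a+1$, extensions, and strictly fewer whenever $x_d \neq 0$). These feed into a short chain of inequalities on the fraction $\frac{2^{d}|(a-1)F_d|/(2a+1)^d}{1 - |aE_{d-2}|/(2a+1)^d}$ that yields $\ehk(A_{p,d}) > \ehk(A_{p,d+1})$ directly. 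Your proposed ``clearing denominators and exhibiting an injection on walk sets'' is in the same spirit but leaves the actual inequality unproved.
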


The first corollary significantly strengthens \cite[Theorem (B)]{Trivedi}, which shows that $\ehk(A_{p,d})$ is a rational function 
for $p > 2^{\lfloor (d-1)/2 \rfloor} (d-3)$ with the degrees 
of the denominator and numerator bounded by $d^{d+2}$. 
In fact, our bound is sharp since $\ehk (A_{p, 4}) = 1 + \frac{5p^2 + 3}{24p^2 + 12}$.

As another corollary, we obtain (Remark~\ref{remark algorithm}) an easy algorithm for computing the rational function $\ehk(A_{p,d})$ ($d$ fixed, $p > 2$ varies). In particular, we verified computationally that for all $d \le 30$ the function $p \mapsto \ehk(A_{p,d})$ is decreasing.

Validating Conjecture~\ref{conj WY 2} and the monotonicity conjecture of \cite{Yoshida} using our approach would require a deeper analysis of the Ehrhart polynomials\footnote{Recently, Kahane informed us \cite{Kahane} that he resolved Conjecture~\ref{conj WY 2} via this approach, thus recovering Meng's result \cite{Meng}.}, but little is known about the Ehrhart polynomial of the extended Fibonacci polytope. 
On the other hand, we recover Trivedi's asymptotic result by proving that
\[
\ehk(A_{p,d}) = 1 + \En{d}/d! + C_d p^{-2} + O(p^{-4}), 
\]
for some $C_d > 0$. Our formula for $\ehk(A_{p,d})$ gives an expression for $C_d$ depending on the coefficient $e_2$ of the Ehrhart polynomial 
$F_d (k) = \En{d}/d! k^d + \frac{3 \En{d}}{2 (d-1)!} k^{d-1}+ e_{d-2}k^{d - 2} + \cdots$. 
We do not know the value of $e_{d-2}$, but, to our luck, it only suffices to show that $e_{d-2} \geq 0$.  Recently, Ferroni, Morales, and Panova \cite{FMP} proved that all coefficients $e_i$ are non-negative.

\subsection*{Acknowledgements} 
We thank Kyle Petersen, and Akiyoshi Tsuchiya for suggestions and comments on an earlier draft of this paper
and Akihiro Higashitani for helping us with Corollary~\ref{cor: rational function}.

The first author was partially supported by the NSF grant CCF-2302173.
The third author was supported by the State Research Agency of Spain through the Ramon y Cajal fellowship RYC2020-028976-I funded by MCIN/AEI/10.13039/501100011033 and by FSE ``invest in your future'', and the project PID2021-125052NA-I00 funded by MICIU/AEI/10.13039/501100011033 and FEDER, EU. The last-named author was partially supported by JSPS Grant-in-Aid for Scientific Research (C) Grant Numbers, 24K06678.

\section{The Fibonacci polytope and its Ehrhart polynomial}

We begin by fixing notation. We use $[n]$ to denote $\{1, \ldots, n\}$. If $P$ is an integer polytope, we use $|P|$ to denote the number of integer points in $P$ and $kP$ for its $k$-dilation.
It is known that the function $k \mapsto |kP|$ is a polynomial, which is called the Ehrhart polynomial. In this work we will need the following two families of polytopes.

\begin{definition}\label{def Fibonacci}
For $d \geq 1$, define the $d$-dimensional \emph{Fibonacci polytope} as 
a subset of $[0, 1]^d$ given by inequalities
\[
x_{i} + x_{i+1} \leq 1,\;\; i = 1, \ldots, d-1 
\]
and the \emph{extended $d$-dimensional Fibonacci polytope} as 
a subset of $[-1, 1]^d$
given by inequalities:
\[
|x_{i}| + |x_{i+1}| \leq 1,\;\;  i = 1, \ldots, d-1. 
\]
\end{definition}

In the terminology of \cite{Stanley, OhsugiTsuchiya1}, the (extended) Fibonacci polytope is the (enhanced) chain polytope of the zigzag poset. 

\begin{remark}\label{Jacobsthal}
All integer points in the Fibonacci polytope are its vertices; by recursion one can show 
that the number of vertices is given by the Fibonacci numbers. 

The numbers of integer points $p(d)$ in the extended Fibonacci polytope of dimension $d$ form the Jacobsthal sequence
$(2^{d+2} - (-1)^{d+2})/3$ (\cite[A001045]{oeis}). Namely, it satisfies the linear recurrence $p(d+1) = p(d) + 2 p(d-1)$ with the standard initial conditions:
if we increase the dimension by adding $x_{d+1}$, then 
any integer point can be extended by $x_{d+1} = 0$. However, 
if $x_{d} = 0$, then the point can be also extended by $\pm 1$.

The number of vertices $v(d)$ of the extended Fibonacci polytope 
satisfies linear recurrence $v(d) = 2v(d-2) + 2v(d-3)$ 
with the initial condition $v(1) = 2, v(2) = 4$ (\cite[A107383]{oeis}).
First, note that vertices correspond to words on the alphabet $\{-1, 0, 1\}$ such that 
there are no consecutive nonzero entries and no `$0$' can be replaced by `$1$'.
The last condition means that the sequence cannot start or end with `$00$' and cannot 
contain three consecutive `$0$'s. 
Such a sequence must therefore end with `$0\pm 1$' or `$0\pm 10$'. In the first case,
removing the ending gives a word in $v(d-2)$ and in the second a word from $v(d-3)$. 
The recursion follows.  
\end{remark}

By \cite{Stanley} the Fibonaci polytope 
is affinely equivalent to the order polytope of the zigzag poset, defined as 
the set of points in $[0,1]^d$ such that $x_1 \geq x_2 \leq x_3 \geq \cdots $
by mapping $x_i \to y_i$ where $y_i = 1 - x_i$ if $i$ is odd and $y_i = x_i$ if $i$ is even. Thus, the two polytopes have equal Ehrhart polynomials and 
a result of Stanley (\cite{Stanley}) gives a canonical triangulation of the Fibonacci polytope. 

\begin{theorem}[Stanley]\label{t Stanley}
The order polytope of a poset $P$ on $[n]$ has a canonical triangulation indexed by linear extensions $P \to [n]$, i.e., permutations of $[n]$ that 
adhere to the partial order. 
\end{theorem}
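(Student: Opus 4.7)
The plan is to construct the triangulation explicitly. To each linear extension $\sigma = (a_1, \ldots, a_n)$ of $P$ (a bijection $[n] \to P$ with $a_i <_P a_j \Rightarrow i < j$) I associate the simplex
\[
\Delta_\sigma := \{x \in [0,1]^n : 0 \leq x_{a_1} \leq x_{a_2} \leq \cdots \leq x_{a_n} \leq 1\}.
\]
Three things need to be verified: (a) $\Delta_\sigma \subseteq \mathcal{O}(P)$; (b) every point of $\mathcal{O}(P)$ lies in some $\Delta_\sigma$; and (c) any two of these simplices meet in a common face. Unimodularity, needed for Ehrhart-theoretic applications, will come for free once the vertices of $\Delta_\sigma$ are identified as $0/1$-vectors whose difference matrix is unimodular.

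Step (a) is immediate: if $i <_P j$ with $i = a_r$, $j = a_s$, then the linear-extension property gives $r < s$, whence $x_i = x_{a_r} \leq x_{a_s} = x_j$ on $\Delta_\sigma$. For (b), given $x \in \mathcal{O}(P)$, pick any permutation $(a_1, \ldots, a_n)$ of $P$ with $x_{a_1} \leq \cdots \leq x_{a_n}$; the inequalities defining $\mathcal{O}(P)$ show that the only possible violations of the poset order along this sequence involve coordinates forced to be equal, so reshuffling the ties so that $P$-smaller elements come first yields a genuine linear extension $\sigma$ with $x \in \Delta_\sigma$.

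The delicate part is (c). The approach is to identify the vertices of $\Delta_\sigma$ with filters (upper sets) of $P$: the extreme points are the $n{+}1$ indicator vectors $v_k$ of the sets $F_k := \{a_{n-k+1}, \ldots, a_n\}$, and each $F_k$ is a filter by the linear-extension property; conversely every filter of $P$ arises this way from some $\sigma$. A face of $\Delta_\sigma$ then corresponds to a chain of filters $\emptyset = F_0 \subset F_{i_1} \subset \cdots \subset F_{i_k} = P$, and one must show that $\Delta_\sigma \cap \Delta_\tau$ is precisely the face spanned by the filters appearing in both chains. One inclusion is clear; for the reverse I would induct on the minimal number of adjacent transpositions of $P$-incomparable elements needed to convert $\sigma$ into $\tau$, observing that each transposition only swaps two coordinates whose equality is already imposed on the common face.

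The main obstacle is exactly this face-matching step in (c): establishing that the collection $\{\Delta_\sigma\}$ genuinely forms a simplicial complex, rather than merely a covering of $\mathcal{O}(P)$ by unimodular pieces with ad-hoc overlaps. Once this is cleared, the remaining assertions — including that the vertices of each $\Delta_\sigma$ are precisely the indicator functions of filters, and that the number of maximal simplices equals the number of linear extensions — follow formally from the filter/vertex dictionary.
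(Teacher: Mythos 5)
The paper does not prove this theorem; it is cited as Stanley's result (\emph{Two Poset Polytopes}, Discrete Comput.\ Geom.\ 1 (1986)), so there is no proof in the paper against which to compare. On its own merits, your outline --- indexing the simplices $\Delta_\sigma$ by linear extensions, identifying the vertices of each $\Delta_\sigma$ with indicator vectors of filters, and reading faces as chains of filters --- is exactly Stanley's approach, and your steps (a) and (b) are fine.

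Step (c), as you acknowledge, is the genuine gap, and the transposition induction you sketch has a subtle flaw: it is not automatic that a point of $\Delta_\sigma \cap \Delta_\tau$ lies in the intermediate simplices $\Delta_\rho$ along a path of adjacent transpositions, so the inductive hypothesis cannot be applied without further work. (It does hold if one first proves that linear extensions of $P$ are connected by geodesics in the weak order and that intermediate extensions on such a geodesic all contain the point --- true, but a nontrivial lemma you would need to supply.) The cleaner route, and the one Stanley takes, is to produce a \emph{canonical} convex decomposition of each $x \in \mathcal{O}(P)$. Let $1 \geq t_1 > t_2 > \cdots > t_m > 0$ be the distinct nonzero values among $x_1, \ldots, x_n$, set $F_j := \{a : x_a \geq t_j\}$, and set $t_{m+1} := 0$. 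Since $x$ is order-preserving each $F_j$ is a filter, the chain $F_1 \subsetneq \cdots \subsetneq F_m$ depends only on $x$, and
\[
x \;=\; (1-t_1)\,1_{\emptyset} \;+\; \sum_{j=1}^{m} (t_j - t_{j+1})\,1_{F_j}.
\]
This expression is the barycentric decomposition of $x$ inside any simplex containing it, so $x \in \Delta_\sigma$ if and only if the canonical chain is a subchain of $\sigma$'s maximal chain of filters, and the minimal face of $\Delta_\sigma$ containing $x$ is determined by $x$ alone. Consequently $\Delta_\sigma \cap \Delta_\tau$ is exactly the set of $x$ whose canonical chain lies in both maximal chains, i.e.\ the common face spanned by the filters appearing in both --- which is the face-matching claim you needed. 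This simultaneously reproves (b), closes (c), and makes the simplicial-complex structure and unimodularity (the vertex differences being $0/1$ vectors along a chain) immediate.
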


Hence the order polytope of the zigzag poset is triangulated 
by \emph{alternating} permutations. Since the number of 
such permutations is the \emph{Euler up-down number} $\En{d}$, we obtain a formula for the volume (see also (\cite[Section 3.8]{StanleySurvey}).
Note that $\sum_{d \geq 0} \En{d}/d! \, x^d = \sec x + \tan x$ 
\cite{Andre1881}.

\begin{corollary}
The volume of the Fibonacci polytope of dimension $d$ equals $\En{d}/d!$.
\end{corollary}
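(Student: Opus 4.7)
The plan is to assemble the pieces already laid out in the preceding discussion. First, I would observe that the substitution $y_i = 1-x_i$ for odd $i$ and $y_i = x_i$ for even $i$ is volume-preserving, being a composition of coordinate reflections; hence the Fibonacci polytope and the order polytope $\mathcal{O}(P)$ of the zigzag poset $P$ on $[d]$ have the same volume. So it suffices to compute $\vol \mathcal{O}(P)$.

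Next, I would apply Theorem~\ref{t Stanley}: the order polytope $\mathcal{O}(P)$ admits a canonical triangulation by simplices indexed by the linear extensions of $P$. A linear extension of the zigzag poset is a permutation $\sigma$ of $[d]$ satisfying $\sigma(1) > \sigma(2) < \sigma(3) > \cdots$, that is, an alternating permutation, and the number of such permutations is $\En{d}$ by definition of the Euler up-down numbers.

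The key remaining point is that each simplex in Stanley's canonical triangulation has volume exactly $1/d!$. Indeed, the simplex corresponding to a linear extension $\sigma$ is cut out by a chain of inequalities $0 \leq x_{\sigma^{-1}(1)} \leq x_{\sigma^{-1}(2)} \leq \cdots \leq x_{\sigma^{-1}(d)} \leq 1$, which is a unimodular image of the standard $d$-simplex and hence has volume $1/d!$. Summing the volumes of the $\En{d}$ pieces of the triangulation yields $\vol \mathcal{O}(P) = \En{d}/d!$. There is no genuine obstacle here: the argument is a bookkeeping exercise combining the affine equivalence, Stanley's theorem, and the standard count of alternating permutations.
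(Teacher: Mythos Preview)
Your proposal is correct and follows essentially the same route as the paper: the paper does not give a separate proof of this corollary but derives it from the preceding discussion, namely the affine equivalence with the zigzag order polytope, Stanley's triangulation by linear extensions (the alternating permutations), and the fact that each simplex in that triangulation has volume $1/d!$. You have made explicit the one point the paper leaves implicit, that each simplex is a unimodular image of the standard $d$-simplex.
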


To the best of our knowledge, there is no closed-form description of the Ehrhart polynomial of this polytope, but a recent paper of Coons and Sullivant \cite{CoonsSullivant} gives a combinatorial formula for the $h^*$ vector.

\begin{definition}
Let $\sigma$ be an alternating permutation in $S_n$. Its \emph{permutation statistic} $\swap(\sigma)$ is the number of $i < n$ such that $\sigma^{-1}(i)<\sigma^{-1}(i+1)-1$. Equivalently, $\swap(\sigma)$ is the number of $i < n$ such that $i$ is to the left of $i+ 1$ and swapping $i$ and $i+ 1$ in $\sigma$ yields another alternating permutation. 
\end{definition}

By \cite[Theorem~13]{CoonsSullivant} the Ehrhart series of the order polytope $Z_d$ of the zigzag poset is 
\[
\frac{\sum_{\sigma } t^{\swap(\sigma)}}{(1-t)^{d+1}}.
\]
over alternating permutations.
This result essentially follows from Theorem~\ref{t Stanley} after realizing that 
the adjacency of two simplices is given by a swap (\cite[Proposition~2.1]{CoonsSullivant}).
Note that the definition of swap is asymmetric to avoid double counting. 
Coons and Sullivant showed that the $h^*$-vector is symmetric and unimodal (see also \cite{PetersenZhuang} for a generalization). 

\begin{corollary}\label{c swap Ehrhart poly}
The swap statistic provides a formula for the Ehrhart polynomial:
if $s_d(m) = \# \{\sigma \mid \swap (\sigma) = m\}$, then
\[
|kF_d|  = \sum_{m = 0}^{d-2} s_d(m) \binom{k+d-m}{d}
= \sum_{i = 0}^{d-2} (-1)^i \binom{k + d - i}{d - i} \left( 
\sum_{m = 0}^{d-2} \binom{m}{i} s_d(m)
\right ). 
\]
\end{corollary}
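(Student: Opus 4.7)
The plan is to read the Ehrhart polynomial off the $h^*$-identity of Coons--Sullivant recalled just above, and then rewrite the resulting expression using an elementary binomial identity. Neither step requires ideas beyond the cited theorem.

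First, I would start from the Ehrhart series
\[
\sum_{k \geq 0} |kF_d|\, t^k \;=\; \frac{\sum_{m=0}^{d-2} s_d(m)\, t^m}{(1-t)^{d+1}}
\]
and combine it with the standard expansion $(1-t)^{-(d+1)} = \sum_{k \geq 0} \binom{k+d}{d} t^k$. Extracting the coefficient of $t^k$ on both sides, with the shift $k \mapsto k-m$ producing the factor $\binom{k+d-m}{d}$, immediately yields the first equality
\[
|kF_d| \;=\; \sum_{m=0}^{d-2} s_d(m) \binom{k+d-m}{d}.
\]

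Second, I would establish the auxiliary binomial identity
\[
\binom{k+d-m}{d} \;=\; \sum_{i=0}^{m}(-1)^i \binom{m}{i}\binom{k+d-i}{d-i}
\]
by writing $t^m = (1-(1-t))^m = \sum_{i=0}^m (-1)^i \binom{m}{i}(1-t)^i$, dividing through by $(1-t)^{d+1}$, and comparing the coefficients of $t^k$ on the two resulting expressions for $t^m/(1-t)^{d+1}$. (Alternatively, one can induct on $m$ using Pascal's rule.) Substituting this identity into the first formula and swapping the order of summation produces
\[
|kF_d| \;=\; \sum_{i=0}^{d-2}(-1)^i \binom{k+d-i}{d-i} \left(\sum_{m=0}^{d-2}\binom{m}{i}\, s_d(m)\right),
\]
the truncation at $i = d-2$ being legitimate because $\binom{m}{i} = 0$ for $i > m$ and $s_d$ is supported on $[0, d-2]$.

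There is no genuine obstacle: the substance of the statement is already contained in the Coons--Sullivant Ehrhart series, and what remains is a mechanical manipulation of binomial coefficients. The only care needed is bookkeeping the summation ranges when the sums are interchanged.
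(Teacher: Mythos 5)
Your proof is correct and follows essentially the same route the paper takes: the first equality is read off from the Coons--Sullivant Ehrhart series exactly as the paper indicates, and the second equality is the ``manipulating the binomial coefficients'' alternative that the paper explicitly mentions (obtained from the expansion $t^m = (1-(1-t))^m$). The only variation is that the paper also sketches a second, inclusion-exclusion route via the Stanley triangulation, which you did not pursue, but that is a genuine alternative rather than a gap.
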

\begin{proof}
The first equality follows from the formula for the Ehrhart series. The second 
formula can be obtained either by manipulating the binomial coefficients or by 
using the inclusion-exclusion formula on the Stanley triangulation based on 
\cite[Proposition~2.1]{CoonsSullivant}: note that $\binom{k + d - i}{d - i}$ is the 
Ehrhart polynomial of a $(d-i)$-dimensional unit simplex.
\end{proof}

\begin{remark}\label{remark Kreweras}
The numbers $s_d(m)$ are recorded in \cite[A205497]{oeis}.
The table of $\sum_{m = 0}^{d-2} \binom{m}{i} s_d(m)$
is given in \cite[A079502]{oeis} based on a paper of Kreweras 
\cite[table on page 20]{Kreweras}. 
Kreweras denotes by $u^n_r$ the number of \emph{alternating surjective} maps $f\colon [n] \to [r]$, i.e., surjections satisfying the up-down condition $f(1) > f(2) < f(3) > \cdots $. 
The equality $u^n_{n-r} = \sum_{m = 0}^{n-2} \binom{m}{r} s_n(m)$
can be shown by constructing $f$ from a permutation $\pi \colon [n] \to [n]$
with $r$ chosen swaps by identifying the values $\pi(i)$ and $\pi(j)$ with $i < j$
if they form a swap. This reduces the image to $n - r$ numbers. The swap condition guarantees that the map is still alternating. 
\end{remark}

\begin{remark}[Petersen]
By Corollary~\ref{c swap Ehrhart poly} we have $|kF_d| = (d+1) + s_d(1)$, so 
$s_d(1) = F(d+1) - (d+1)$ (a shift of \cite[A001924]{oeis}), where $F(n)$ is the Fibonacci sequence with the initial condition $F(0) = F(1) = 1$. 
\end{remark}

\begin{definition}
A lattice polytope $P$ is reflexive if there is an interior lattice point $x$ such that every facet has lattice distance one from $x$. In this case, $x$ is the unique interior lattice point of $P$. A lattice polytope $P$ is Gorenstein if there is some positive integer $r$ such that $rP$ is reflexive. In this case, $r$ is uniquely determined and called the index of $P$.
\end{definition}

\begin{lemma}\label{lem: Gorenstein}
The Fibonacci polytope is a Gorenstein polytope of index $3$ and 
the extended Fibonacci polytope is a Gorenstein polytope of index $1$. 

In particular, the sequence $s_d(0), \ldots, s_d(d-2)$ is symmetric.
\end{lemma}
\begin{proof}
By the proof of \cite[Theorem~32]{CoonsSullivant}
the Fibonacci polytope $F_d$
is a Gorenstein polytope of index $3$ (i.e., $a$-invariant $3$).
Similarly, the extended Fibonacci polytope $E_d$ is a Gorenstein polytope of index $1$ (see, also \cite[Theorem~0.1]{OhsugiTsuchiya1}). 
Hence, the $h^*$-polynomial of $F_d(x)$ 
is a symmetric polynomial of degree $d-2$. 
\end{proof}

\begin{lemma}\label{l facet number}
We have a relation 
\[
\sum_{m = 0}^{d-2} ms_d(m) = \En{d}\left (\frac{d}2 - 1\right).
\]
\end{lemma}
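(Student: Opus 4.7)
The plan is to deduce the lemma immediately from the palindromic symmetry of the $h^*$-vector, $s_d(m) = s_d(d-2-m)$ for $0 \le m \le d-2$, which (as highlighted just above) is exactly the symmetry proved by Coons--Sullivant. Once palindromy is in hand, the lemma is the standard first-moment identity for any palindromic nonnegative sequence supported on $\{0,1,\dots,d-2\}$.

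Before carrying out the calculation I would record the total $\sum_{m=0}^{d-2} s_d(m) = \En{d}$. This comes from comparing the two expressions for the leading coefficient of the Ehrhart polynomial: from Corollary~\ref{c swap Ehrhart poly} the coefficient of $k^d$ in $|kF_d|$ is $\tfrac{1}{d!}\sum_m s_d(m)$, while by the preceding Corollary on the volume of the Fibonacci polytope it equals $\En{d}/d!$.

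The main step is the following reindexing. Using palindromy for the first equality and substituting $m \mapsto d-2-m$ for the second,
$$\sum_{m=0}^{d-2} m\,s_d(m) \;=\; \sum_{m=0}^{d-2} m\,s_d(d-2-m) \;=\; \sum_{m=0}^{d-2} (d-2-m)\,s_d(m) \;=\; (d-2)\En{d} \;-\; \sum_{m=0}^{d-2} m\,s_d(m).$$
Solving gives $2\sum_{m=0}^{d-2} m\,s_d(m) = (d-2)\En{d}$, which is the lemma.

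There is no real obstacle once palindromy is cited. The only point that would require a sentence of justification is that the palindromy of Coons--Sullivant is in precisely the form $s_d(m) = s_d(d-2-m)$, i.e.\ that the $h^*$-polynomial has degree exactly $d-2$; this is clear since $\swap(\sigma)\in\{0,1,\dots,d-2\}$ by definition, and the two extreme values are realized (for instance, the identity-like alternating permutation achieves $\swap=0$). Should one prefer not to invoke Coons--Sullivant, palindromy could alternatively be obtained by noting that the zigzag poset is graded with every covering pair running between a minimum and a maximum, so $O(Z_d)$ is Gorenstein of index $3$ (the point with coordinates $2$ on the maxes and $1$ on the mins is an interior lattice point of $3\,O(Z_d)$), and Gorenstein polytopes have palindromic $h^*$-vector by Hibi's theorem.
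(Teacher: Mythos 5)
Your proof is correct and follows essentially the same route as the paper: both rely on the Coons--Sullivant palindromy $s_d(m)=s_d(d-2-m)$ together with $\sum_m s_d(m)=\En{d}$, and both derive the identity by the standard pairing/reindexing argument for a symmetric sequence. The alternative justification of palindromy via Hibi's theorem on Gorenstein order polytopes is a nice aside, but the core argument matches the paper's.
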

\begin{proof}
It was shown in \cite[Theorem~4.1]{CoonsSullivant} that the 
sequence $s_d(0), \ldots, s_d(d-2)$ is symmetric. 
Hence $is_d(i) + (d-2 - i)s_d(d-2 -i) = (d/2 - 1) (s_d(i) + s_d(d-2 -i))$.
Thus, the lemma easily follows from the relation $\sum_{m = 0}^{d-2} s_d(m) = \En{d}$.
\end{proof}

We are now able to expand the Ehrhart polynomial of the Fibonacci polytope to the next term. 

\begin{corollary}\label{c Ehrhart expand}
The Ehrhart polynomial $P_d(k) := |kF_d|$ of the $d$-dimensional Fibonacci polytope is 
\[
P_d(k)  = \frac{\En{d}}{d!} k^d + \frac{3}{2} \frac{\En{d}}{(d-1)!} k^{d - 1}
+ \frac{1}{(d-2)!}\left(\En{d}\frac{-3d^2 + 17d + 2}{24} + \sum_{m = 0}^{d-2} \binom{m}{2} s_d(m) \right) k^{d - 2} + O(k^{d - 3}).
\]

In particular, 
\[
P_d\left(x - \frac 32 \right) =   \frac{\En{d}}{d!} x^d + 
\frac{1}{(d-2)!}\left(\En{d}\frac{-3d^2 + 17d - 25}{24} + \sum_{m = 0}^{d-2} \binom{m}{2} s_d(m) \right) x^{d - 2} + O(x^{d - 3}).
\]
\end{corollary}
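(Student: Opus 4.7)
The proof is a direct computation starting from the formula
\[
P_d(k) = \sum_{m=0}^{d-2} s_d(m) \binom{k+d-m}{d}
\]
of Corollary~\ref{c swap Ehrhart poly}. The plan is to expand each $\binom{k+d-m}{d}$ as a polynomial in $k$ up to order $k^{d-2}$, then aggregate against $s_d(m)$ using the two moment identities $\sum_m s_d(m)=\En{d}$ and $\sum_m m\, s_d(m)=\En{d}(d/2-1)$ provided by Corollary~\ref{c swap Ehrhart poly} and Lemma~\ref{l facet number}.

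First I would write $\binom{k+d-m}{d}=\frac{1}{d!}\prod_{j=1}^{d}(u+j)$ with $u=k-m$, and expand the rising factorial as $\frac{1}{d!}\bigl(u^d+S_1 u^{d-1}+S_2 u^{d-2}+O(u^{d-3})\bigr)$, where $S_1=d(d+1)/2$ and $S_2=d(d-1)(d+1)(3d+2)/24$ are the first two elementary symmetric functions of $\{1,\ldots,d\}$ (the latter computed from $\sum j^2=d(d+1)(2d+1)/6$). Re-expanding $u^i=(k-m)^i$ by the binomial theorem and collecting powers of $k$ yields coefficients $1/d!$ for $k^d$, $(d+1-2m)/(2(d-1)!)$ for $k^{d-1}$, and $\frac{1}{2(d-2)!}\bigl[m^{2}-(d+1)m+(d+1)(3d+2)/12\bigr]$ for $k^{d-2}$.

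Summing against $s_d(m)$ and applying the two moment identities, the $k^{d-1}$ coefficient becomes $\bigl[(d+1)\En{d}-2\En{d}(d/2-1)\bigr]/(2(d-1)!)=3\En{d}/(2(d-1)!)$. For the $k^{d-2}$ coefficient I would rewrite $m^{2}=2\binom{m}{2}+m$ to convert $\sum_m m^{2}s_d(m)$ into $2\sum_m \binom{m}{2}s_d(m)+\En{d}(d/2-1)$; then the $\En{d}$-part is $(d/2-1)(1-(d+1))+(d+1)(3d+2)/12=(-3d^2+17d+2)/12$, which, combined with the overall factor $1/2$ and the surviving sum $\sum_m \binom{m}{2}s_d(m)$, gives the stated expression.

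The shifted formula follows by substituting $k=x-3/2$. The coefficient of $x^{d-1}$ vanishes by design: the contribution $-\tfrac{3d}{2}\cdot\frac{\En{d}}{d!}$ from $(x-3/2)^d$ cancels the $k^{d-1}$ coefficient $\tfrac{3}{2}\cdot\frac{\En{d}}{(d-1)!}$. The coefficient of $x^{d-2}$ picks up $\tfrac{9}{8}\cdot\frac{\En{d}}{(d-2)!}$ from $(x-3/2)^d$ and $-\tfrac{9}{4}\cdot\frac{\En{d}}{(d-2)!}$ from the middle term, which shifts the bracketed numerator from $(-3d^2+17d+2)/24$ to $(-3d^2+17d-25)/24$, while $\sum_m \binom{m}{2}s_d(m)$ is unaffected. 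The only real obstacle is the algebraic bookkeeping; conceptually the argument is just a careful expansion of a binomial coefficient combined with the first two known linear moments of the swap statistic, and no higher-order information about $s_d(m)$ is needed.
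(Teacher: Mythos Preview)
Your argument is correct and is essentially the same computation the paper carries out: both proofs extract the top three coefficients of $P_d(k)$ from Corollary~\ref{c swap Ehrhart poly} using only the identities $\sum_m s_d(m)=\En{d}$ and Lemma~\ref{l facet number}, and then substitute $k=x-\tfrac32$ for the shifted form. The only cosmetic difference is that the paper works from the second (inclusion--exclusion) expression in Corollary~\ref{c swap Ehrhart poly}, where the moments $\sum_m \binom{m}{i}s_d(m)$ for $i=0,1,2$ appear directly, whereas you expand $\binom{k+d-m}{d}$ from the first expression and then convert $m^2=2\binom{m}{2}+m$; the algebra is identical after that conversion.
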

\begin{proof}
By Lemma~\ref{lem: Gorenstein} $s_d(0), \ldots, s_d(d-2)$ is symmetric. 
Hence $is_d(i) + (d-2 - i)s_d(d-2 -i) = (d/2 - 1) (s_d(i) + s_d(d-2 -i))$.
Thus, the formula for the coefficient at $k^{d-1}$ easily follows from the relation $\sum_{m = 0}^{d-2} s_d(m) = \En{d}$.

We now compute the coefficient at $k^{d - 2}$.
By Corollary~\ref{c swap Ehrhart poly} it can be written as 
\[
\frac{\sum_{1 \leq i < j \leq d} ij}{d!} \times
\sum_{m = 0}^{d-2} s_d(m)
- \frac{\binom{d}{2}}{(d-1)!} \times \sum_{m = 0}^{d-2} m s_d(m) +
\frac{1}{(d - 2)!} \times
\sum_{m = 0}^{d-2} \binom{m}{2} s_d(m).
\]
Since $\sum_{1 \leq i < j \leq d} ij = \frac{(d-1)d(d+1)(3d+2)}{24}$, 
it now remains to use Lemma~\ref{l facet number}
to compute that
\[
\frac{\sum_{1 \leq i < j \leq d} ij}{d!}
\sum_{m = 0}^{d-2} s_d(m)
- \frac{\binom{d}{2}}{(d-1)!} \sum_{m = 0}^{d-2} m s_d(m)
= \frac{\En{d}}{(d-2)!} \frac{(d+1)(3d + 2)}{24}
- \frac{\En{d}}{(d-2)!} \frac{d(d - 2)}{4}. 
\]

Now the conclusion can be verified straightforwardly. 
\end{proof}

\begin{remark}
The coefficients of 
the Ehrhart polynomial of the Fibonacci polytope are positive
by \cite[Corollary~5.2]{FMP}. 
\end{remark}

\begin{remark}
By \cite[Theorem~0.3]{OhsugiTsuchiya1} (see also \cite[Theorem~4.6]{Petersen} and \cite{OkadaTsuchiya}) the $h^*$-vector of the extended $d$-dimensional Fibonacci polytope is $(x+1)^d p_d (4x(x+1)^{-2})$, where 
$p_d(x)$ is the left peak polynomial of the zigzag poset. 
The coefficients of $p_d(x)$ are counting the left peaks (introduced in \cite{Petersen}) of the linear extensions of the zigzag poset
and hence present a statistic on alternating permutations.
We do not know an intrinsic description of this statistic. 

Here are the first few polynomials:
$p_2(x) = 1, p_3(x) = 1+x, p_4(x) = 1 + 3x + x^2, p_5(x) = 1 + 8x+ 7x^2, p_6(x) = 1 + 18x + 38x^2 +4x^3, p_7(x) = 1 + 39x + 167x^2+65x^3, p_8(x) = 1+81x+660x^2 +602x^3+41x^4, p_9(x) = 1+166x+2432x^2 + 4396x^3 + 991x^4$.
\end{remark}

\section{The Han--Monsky algorithm and its matrix interpretation}
\subsection{Han--Monsky algorithm for computing the Hilbert--Kunz multiplicity}

In \cite{HanMonsky} Han and Monsky provided an approach to the calculation of Hilbert--Kunz multiplicity for a special class of local rings. 
Their key idea is based on a concept of {\it representation ring}, which is a certain Grothendieck ring. 

Let $k$ be a fixed field and consider the subcategory $\modt(k)$ of finitely generated $k[T]$-modules with a nilpotent action of the variable $T$. 
On pairs $(M, N)$ of such modules there is a natural equivalence relation: 
$(M_1, N_1) \equiv (M_2, N_2)$ if and only $M_1 \oplus N_2 \cong M_2 \oplus N_1$. 
The equivalence class of a pair $(M, N)$ will be denoted as $M - N$. 

\begin{definition}
The representation ring $\Gamma_k$ consists of the equivalence classes $M - N$ with  
the operations of direct sum and tensor product:
\[
(M_1 - N_1) \times (M_2 - N_2) := 
\left [(M_1 \otimes_k M_2) \oplus (N_1 \otimes_k N_2) \right ]
-\left [(M_1 \otimes_k N_2) \oplus (N_1 \otimes_k M_2) \right ].
\]
The equivalence class $k - 0$ is the identity element of $\Gamma_k$.
\end{definition}

Since $k[T]$ is a PID, any $M \in \modt(k)$ decomposes as 
$M = \oplus_{i = 1}^{\infty} k[T]/(T^i)^{\oplus a_i}$. 
This shows that $\Gamma_k$ is a free $\mathbb Z$-module with a basis $\delta_i = k[T]/(T^i) - 0$, $i \in \mathbb{Z}_{> 0}$.
The function $D_k$ is defined by setting 
\[
D_k(M) := \dim_k M/TM = \sum_{i = 1}^\infty a_i
\] 
and extending it to $\Gamma_k$ by linearity. Thus 
$D_k$ is the sum of the coordinates in the basis $\{\delta_i\}$.

The following result is a restatement of \cite[Lemma 5.6]{HanMonsky}.

\begin{theorem}\label{thm HanMonsky}
For a given field $k$ of positive characteristic $p > 0$ define $$R := k[[x_0, \ldots, x_d]]/(x_0^{n_0} + \cdots + x_{d}^{n_d}).$$
For a fixed integer $e \geq 0$ and each $i =0, \ldots, d$, define the remainder $r_i$ determined by $p^e = n_i a_i + r_i$, $0 \leq r_i < n_i$.
In the above notation, 
\[
\dim_k R/(x_0^{p^e}, \ldots, x_d^{p^e}) = D_k \left( 
\prod^{d}_{i = 0}  (n_i - r_i)\delta_{a_i} + r_i \delta_{a_i + 1} \right).
\]
\end{theorem}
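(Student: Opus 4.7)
The plan is to realize the quotient $R/(x_0^{p^e}, \ldots, x_d^{p^e})$ as the cokernel of a nilpotent operator on an Artinian ring and then decompose that operator's module into cyclic pieces. Set $S = k[[x_0, \ldots, x_d]]$ and $f = x_0^{n_0} + \cdots + x_d^{n_d}$, so that
\[
R/(x_0^{p^e}, \ldots, x_d^{p^e}) = S/(f, x_0^{p^e}, \ldots, x_d^{p^e}) = M/fM,
\]
where $M := S/(x_0^{p^e}, \ldots, x_d^{p^e})$. I give $M$ the structure of an object of $\modt(k)$ by letting $T$ act as multiplication by $f$; since $M$ is Artinian local with $f \in \mathfrak{m}$, the action is nilpotent. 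For any $N \in \modt(k)$, decomposing $N \cong \bigoplus_j k[T]/(T^{c_j})$ shows that each cyclic summand contributes a one-dimensional cokernel under $T$, so $\dim_k N/TN$ equals the number of summands, which is $D_k([N])$. Hence the length we want equals $D_k([M])$.

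Next I decompose $[M]$ in $\Gamma_k$. The evident $k$-algebra isomorphism $M \cong \bigotimes_{i=0}^{d} k[x_i]/(x_i^{p^e})$ turns multiplication by $f$ into $\sum_{i} 1 \otimes \cdots \otimes x_i^{n_i} \otimes \cdots \otimes 1$; that is, if I equip each $M_i := k[x_i]/(x_i^{p^e})$ with the $k[T]$-structure where $T$ acts by $x_i^{n_i}$ (again nilpotent), then $M$ is the tensor product of the $M_i$ with $T$ acting by the Leibniz rule $T \otimes 1 + 1 \otimes T$ over factors. Since this is precisely the multiplication prescribed on $\Gamma_k$, we obtain $[M] = \prod_{i=0}^{d} [M_i]$.

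It remains to determine $[M_i]$. Writing $p^e = n_i a_i + r_i$ with $0 \leq r_i < n_i$, I partition the monomial basis of $M_i$ into $n_i$ orbits under multiplication by $x_i^{n_i}$, indexed by the residue $j \in \{0, 1, \ldots, n_i - 1\}$; the orbit of $x_i^j$ is $\{x_i^{j + k n_i} : j + k n_i < p^e\}$. A direct count from $p^e = n_i a_i + r_i$ gives length $a_i + 1$ when $0 \leq j < r_i$ and length $a_i$ when $r_i \leq j < n_i$, and each orbit generates a cyclic $k[T]$-summand. Hence
\[
M_i \cong k[T]/(T^{a_i + 1})^{\oplus r_i} \oplus k[T]/(T^{a_i})^{\oplus (n_i - r_i)},
\]
so $[M_i] = (n_i - r_i)\delta_{a_i} + r_i \delta_{a_i + 1}$. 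Multiplying these classes in $\Gamma_k$ and applying $D_k$ yields the claimed formula.

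The principal conceptual obstacle is in the second step: one must verify that the tensor product of the $M_i$ with $T$ acting by the Leibniz rule matches the multiplication in $\Gamma_k$, which is implicit in the definition but easy to overlook. Everything else amounts to the orbit bookkeeping for a single variable and the elementary identity $\dim_k N/TN = D_k([N])$.
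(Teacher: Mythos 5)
Your proof is correct and follows essentially the same route as the paper's: both arguments give the quotient the structure of a finite $k[T]$-module via $T = x_0^{n_0} + \cdots + x_d^{n_d}$, identify it as a tensor product of single-variable pieces $M_i = k[x_i]/(x_i^{p^e})$ with $T$ acting by $x_i^{n_i}$, decompose each $M_i$ by orbits of multiplication by $x_i^{n_i}$ into cyclic $k[T]$-modules of lengths $a_i$ or $a_i + 1$, and read off $D_k$. Your write-up spells out a couple of points the paper leaves implicit (that $\dim_k N/TN$ is additive over cyclic summands, and that the Leibniz-rule $T$-action on the tensor product is the one built into the definition of multiplication in $\Gamma_k$), but the decomposition, the orbit count, and the final identification with $(n_i - r_i)\delta_{a_i} + r_i \delta_{a_i+1}$ are all the same.
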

\begin{proof}
Let us sketch the idea of the proof. For a positive integer $n$, consider $k[x]/(x^{p^e})$ as a $k[T]$-module with the action $Tf  := x^{n}f$. Denote this module as $M_n$. 
Consider the division with the remainder $p^e =an + r$. 
Then the submodule $k[T] x^i = k\langle x^i, x^{i + n}, \ldots \rangle \subset M$ is isomorphic to $k[T]/(T^{a + 1})$ if $0 \leq i < r$ and to $k[T]/(T^a)$ if $r \leq i < n$.
This gives a direct sum decomposition 
\[
M_n = \oplus_{0 \leq i < n} k[T]x^i = [k[T]/(T^{a + 1})]^{\oplus r} \oplus [k[T]/(T^{a})]^{\oplus n-r}
\]
as a $k[T]$-module.

Furthermore,
$M_{n_0} \otimes_k \cdots \otimes_k M_{n_d} \cong k[x_0, \ldots, x_d]/ (x_0^{p^e}, \ldots, x_d^{p^e})$ 
with $T \mapsto x_0^{n_0} + \cdots + x_d^{n_d}$,
so $\dim_k R/(x_0^{p^e}, \ldots, x_d^{p^e}) = D_k(M_{n_0} \otimes_k \cdots \otimes_k M_{n_d})$. 
\end{proof}

The difficulty of using the theorem is decomposing the product of basis elements $\delta_i \delta_j$ as a linear combination of the basis elements. Han and Monsky 
were able to provide the needed multiplication rules. In particular, their work gives the following recipe.

\begin{corollary}\label{cor HanMonsky}
Let $k$ be a field of characteristic $p > 2$ and $a = (p-1)/2$.
Then 
\[
\ehk(A_{p,d}) := 
\ehk(k[[x_0, \ldots, x_d]]/(x_0^2 + \cdots + x_d^2)) = 1 + \frac{D_k((\delta_a + \delta_{a+1})^{d+1}) - p^d}{p^d - (-1)^{a(d+1)} D_k((\delta_{a+1} - \delta_{a})^{d + 1})}.
\]  
\end{corollary}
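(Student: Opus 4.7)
The plan is to specialize Theorem~\ref{thm HanMonsky} to the quadric case and pass to the limit using the Frobenius structure on the representation ring $\Gamma_k$. Taking $n_0 = \cdots = n_d = 2$ and noting that $p^e$ is odd for every $e \geq 1$ (since $p > 2$), the division $p^e = 2 a_i + r_i$ gives $r_i = 1$ and $a_i = b_e := (p^e - 1)/2$ uniformly in $i$. Writing $u_e := \delta_{b_e} + \delta_{b_e+1}$ and $v_e := \delta_{b_e+1} - \delta_{b_e}$, Theorem~\ref{thm HanMonsky} yields
\[
\ell\bigl(R/\mathfrak{m}^{[p^e]}\bigr) = D_k(u_e^{d+1}) =: X_e,
\]
so that $\ehk(A_{p,d}) = \lim_{e \to \infty} X_e/p^{ed}$.

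Next, I would exploit the arithmetic identity $b_{e+1} = p b_e + a$. Under the Frobenius pullback $F$ on $\Gamma_k$ (the functor that reinterprets the $T$-action as a $T^p$-action), a direct substitution into $F(\delta_m) = r \delta_{q+1} + (p-r)\delta_q$ (with $m = pq + r$, $0 \leq r < p$), using $p = 2a+1$, produces the clean identities
\[
F(u_{e+1}) = p \cdot u_e, \qquad F(v_{e+1}) = v_e.
\]
Combined with the explicit multiplication rules for $\Gamma_k$ from \cite{HanMonsky}, these propagate to the $(d+1)$-th powers and yield a two-variable linear recursion for $(X_e, Y_e)$, where $Y_e := D_k(v_e^{d+1})$, governed by a $2 \times 2$ matrix whose eigenvalues are $p^d$ and $(-1)^{a(d+1)}$. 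The sign $(-1)^{a(d+1)}$ tracks the parity of $v_1 = \delta_{a+1} - \delta_a$ factors appearing when iterating the multiplication $d+1$ times.

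Because $|(-1)^{a(d+1)}| = 1 < p^d$, the eigenvalue $p^d$ dominates and the limit $\lim X_e/p^{ed}$ extracts the coefficient of the dominant eigenvector in the initial data $(X_1, Y_1)$. Matching constants, this evaluates to
\[
\ehk(A_{p,d}) = \frac{X_1 - (-1)^{a(d+1)} Y_1}{p^d - (-1)^{a(d+1)} Y_1} = 1 + \frac{X_1 - p^d}{p^d - (-1)^{a(d+1)} Y_1},
\]
and substituting $X_1 = D_k((\delta_a + \delta_{a+1})^{d+1})$ and $Y_1 = D_k((\delta_{a+1} - \delta_a)^{d+1})$ produces the stated formula.

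The principal technical obstacle is extracting the $2 \times 2$ recursion: the Frobenius identities supply the correct eigenvalues, but because $D_k$ is only additive, converting the ring-theoretic relations $F(u_{e+1}) = p u_e$ and $F(v_{e+1}) = v_e$ into a recurrence for the integer invariants $(X_e, Y_e)$ requires the full decomposition rules for products $\delta_m \delta_n$ in characteristic $p$ supplied by \cite{HanMonsky}. Pinning down the precise off-diagonal coupling of $X_e$ to $Y_e$ — and hence the exact form of the $(-1)^{a(d+1)}$ contribution — is the calculational heart of the argument.
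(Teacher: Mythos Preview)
Your outline is headed in the right direction and in spirit parallels how Han--Monsky themselves establish their Theorems~5.5 and~5.7, but the paper's proof is much shorter precisely because it \emph{cites} those theorems rather than re-deriving them. The paper invokes \cite[Theorems~5.3,~5.5,~5.7]{HanMonsky} to obtain, already at $e=1$, the exact identity
\[
\dim_k A_{p,d}/(x_0^{p}, \ldots, x_d^{p}) \;=\; p^{d}\,\ehk(A_{p,d}) + (1 - \ehk(A_{p,d}))\, D_k\bigl((-1)^{a}(\delta_{a+1}-\delta_{a})^{d+1}\bigr),
\]
and then simply solves for $\ehk(A_{p,d})$. No limit is taken; the passage to $p \to \infty$ is already absorbed into the Han--Monsky machinery.

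The gap in your proposal is the one you yourself flag: you have not produced the $2\times 2$ recursion. Your Frobenius map $F$ is indeed a ring endomorphism of $\Gamma_k$ (since $T$ acts on a tensor product by $T\otimes 1 + 1\otimes T$ and $(\,\cdot\,)^p$ is additive in characteristic $p$), so the identities $F(u_{e+1})=p\,u_e$ and $F(v_{e+1})=v_e$ do propagate multiplicatively to give $F(u_{e+1}^{d+1})=p^{d+1}u_e^{d+1}$ and $F(v_{e+1}^{d+1})=v_e^{d+1}$. But $D_k$ does not commute with $F$: for instance $D_k(\delta_m)=1$ while $D_k(F(\delta_m))=p$ whenever $m\ge p$. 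So these ring identities do not translate into a recursion on $(X_e,Y_e)=(D_k(u_e^{d+1}),D_k(v_e^{d+1}))$ without further input. The missing ingredient --- relating $D_k$ across the Frobenius twist via the reduction of $\lambda_j$ for $j\ge p$ to products of $\lambda_{j'}$ with $j'<p$ --- is exactly the content of \cite[Theorems~5.3--5.7]{HanMonsky}. In other words, the ``calculational heart'' you defer is the whole of the argument the paper is citing; once one grants it, your endgame algebra (extracting the dominant-eigenvalue coefficient and rewriting as $1+\frac{X_1-p^d}{p^d-(-1)^{a(d+1)}Y_1}$) is correct and matches the paper.
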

\begin{proof}
Essentially, this is explained in \cite[Example 2, page 134]{HanMonsky}.
Since $p$ is odd, $\mu = 1$ can be used in \cite[Theorem~5.3]{HanMonsky}, 
so \cite[Theorems~5.5, 5.7]{HanMonsky} will show that 
\[
\dim_k A_{p,d}/(x_0^{p}, \ldots, x_d^{p}) = p^{d} \ehk(A_{p,d})
+ (1 - \ehk(A_{p,d})) D_k((-1)^{a} (\delta_{a+1} - \delta_{a})^{d+1})
\]
and we solve the equation for the Hilbert--Kunz multiplicity.

\end{proof}

\subsection{A linear algebra approach}
In order to use Corollary~\ref{cor HanMonsky} we need some of the multiplication rules in the representation ring $\Gamma_k$.  
Han and Monsky found it more convenient to work with another basis of $\Gamma_k$ instead of $\{\delta_i\}$. We set $\lambda_0 = \delta_1$
and $\lambda_i = (-1)^{i} (\delta_{i+1} - \delta_{i})$, 
so that $\delta_i = \lambda_0 - \lambda_1 + \cdots + (-1)^{i-1} \lambda_{i-1}$. 
Then, by definition, $D_k(M - N)$ is the coefficient at $\lambda_0$ of the decomposition of
$M- N$ in the basis $\{\lambda_i\}$. 

The following is an easy restatement of \cite[Theorem~2.5]{HanMonsky}.

\begin{theorem}[Han--Monsky]
Let $k$ be a field of characteristic $p > 0$ and 
$0 \leq i < j \leq p -1$. Then
\begin{enumerate}
\item if $i + j \leq p -1$ then $\lambda_i\lambda_j = \sum_{j - i}^{j + i} \lambda_k$.
\item if $i + j \geq p$ then $\lambda_i\lambda_j = \lambda_{p - 1 - i} \lambda_{p-1-j}$.
\end{enumerate}
\end{theorem}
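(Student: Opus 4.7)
The plan is to pass to the $\delta$-basis, reducing everything to a Jordan-block computation for the nilpotent operator $T\otimes 1 + 1\otimes T$ on $k[T]/(T^a)\otimes_k k[T]/(T^b)$. Expanding via $\lambda_i = (-1)^i(\delta_{i+1}-\delta_i)$ gives
\[
\lambda_i\lambda_j \;=\; (-1)^{i+j}\bigl(\delta_{i+1}\delta_{j+1} - \delta_{i+1}\delta_j - \delta_i\delta_{j+1} + \delta_i\delta_j\bigr),
\]
so the task becomes computing $\delta_a\delta_b := [k[T]/(T^a)\otimes_k k[T]/(T^b)]$ for the relevant $(a,b)$.

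For part (1), the hypothesis $i+j\le p-1$ gives $a+b\le p+1$ in every one of the four products above, so the largest Jordan block that could appear has size $a+b-1\le p$. In this regime I would prove the characteristic-free Clebsch--Gordan formula
\[
\delta_a\delta_b \;=\; \delta_{a+b-1}+\delta_{a+b-3}+\cdots+\delta_{|a-b|+1}
\]
by induction on $\min(a,b)$, splitting off the cyclic submodule generated by $1\otimes 1$. This submodule has order exactly $a+b-1$ because $(T_1+T_2)^n=\sum\binom{n}{k}T_1^k T_2^{n-k}$ is nonzero for $n = a+b-2$: no troublesome binomial coefficient with $n<p$ vanishes modulo $p$. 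Substituting into the expansion of $\lambda_i\lambda_j$ and collecting terms, the alternating differences telescope to $\sum_{k=j-i}^{j+i}\lambda_k$.

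For part (2), the hypothesis $i+j\ge p$ puts all four products into the wraparound regime where Clebsch--Gordan fails: in characteristic $3$, for instance, $\delta_3\delta_2 = 2\delta_3$ rather than $\delta_4+\delta_2$, because the Frobenius identity $(T_1+T_2)^p=T_1^p+T_2^p=0$ forces Jordan blocks to truncate at size $p$. The strategy is to reduce to part (1): since $(p-1-i)+(p-1-j)\le p-2$, the right-hand side $\lambda_{p-1-i}\lambda_{p-1-j}$ is already computed by (1), so it suffices to establish the duality $\lambda_i\lambda_j=\lambda_{p-1-i}\lambda_{p-1-j}$. I would do this either by (a) deriving a ``corrected'' Clebsch--Gordan rule for $\delta_a\delta_b$ when $a+b\ge p+2$ of the shape $\delta_a\delta_b = c\,\delta_p + \delta_{a'}\delta_{b'}$ with $a',b'$ obtained by reflecting $a,b$ across $p$ and $c$ determined by the Frobenius-collapse count, substituting into the $\lambda$-expansion, and verifying the $\delta_p$ contributions cancel between the two sides; or (b) establishing a complementarity $\delta_a+\delta_{p-a}\equiv \delta_p\pmod{\text{controlled error}}$ that translates directly to $\lambda_i+\lambda_{p-1-i}$ being annihilated by multiplication with $\lambda_j$ up to the same expression.

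The main obstacle is the wraparound computation in part (2): the characteristic-$p$ behavior of $\delta_a\delta_b$ for $a+b>p+1$ is delicate, because the vanishing of intermediate binomial coefficients is governed by the base-$p$ expansions of $a$ and $b$ (Kummer's theorem), so careful bookkeeping is needed to see how the decomposition ``folds'' at the threshold $p$. Part (1), by contrast, is a routine manipulation once the classical Clebsch--Gordan formula is available; part (2) is where the genuinely positive-characteristic content of the Han--Monsky rules lives.
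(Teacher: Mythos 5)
The paper does not prove this theorem at all: it is stated as a restatement of \cite[Theorem~2.5]{HanMonsky} and cited, so there is no in-paper argument to compare your sketch against. Your reduction to the $\delta$-basis is the natural one and the four-term expansion of $\lambda_i\lambda_j$ is correct; for part (1) the telescoping does produce $\sum_{k=j-i}^{j+i}\lambda_k$ (I checked small cases), and your observation that $a+b\le p+1$ keeps every relevant binomial coefficient away from $p$-divisibility is the right point. Note, though, that ``splitting off the cyclic submodule generated by $1\otimes 1$'' leaves two non-trivial steps unaddressed: why that cyclic submodule is a direct summand (it attains the full exponent, so the PID structure theorem gives this), and why a direct complement is $V_{a-1}\otimes V_{b-1}$ (one can exhibit the injective $k[T]$-map given by multiplication by $x-y$, viewing $V_a$ as polynomials of degree $<a$). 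Standard, but your sketch elides it.

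The genuine gap is in part (2), where you correctly identify strategy (a) but do not carry it out. Pin it down as the single identity
\[
\delta_a\delta_b \;-\; \delta_{p-a}\delta_{p-b} \;=\; (a+b-p)\,\delta_p \qquad\text{for all } 1\le a,b\le p,\ \delta_0:=0,
\]
equivalently $V_a\otimes V_b\cong V_p^{\oplus(a+b-p)}\oplus\bigl(V_{p-a}\otimes V_{p-b}\bigr)$ when $a+b\ge p$. Substituting this into your four-term expansion, the $\delta_p$-coefficient is the second difference $(i+j+2-p)-2(i+j+1-p)+(i+j-p)=0$, so the $\delta_p$ contributions cancel identically and what remains is exactly $\lambda_{p-1-i}\lambda_{p-1-j}$; the hypothesis $i+j\ge p$ is only needed so that $(p-1-i)+(p-1-j)\le p-2$ puts the right-hand side in the range where part (1) applies. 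This displayed identity is the real content you still owe, and it is not hard but must be proved (e.g.\ by a rank computation, or from $V_p$ being projective so $V_p\otimes V_b\cong bV_p$ plus the short exact sequence $0\to V_{p-a}\to V_p\to V_a\to 0$). Two smaller remarks: your appeal to Kummer's theorem is a red herring, since with $a,b\le p$ the only threshold is the single Steinberg bound $p$ and no base-$p$ digit analysis is required; and strategy (b) is not salvageable as stated, because $\delta_a+\delta_{p-a}\ne\delta_p$ in $\Gamma_k$ (different Jordan types) and there is no clean ``controlled error'' version of it that I can see.
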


\begin{example}
The multiplication matrices of $\lambda_1$ and $\lambda_2$ in the basis $\{\lambda_i\}_{i =0}^{p-1}$
are given by:  
\[
\lambda_1 = 
\begin{bmatrix}
0 & 1 & 0 & 0 & \cdots & 0 & 0 & 0\\
1 & 1 & 1 & 0 & \cdots & 0 & 0 & 0\\
0 & 1 & 1 & 1 & \cdots & 0 & 0 & 0\\
0 & 0 & 1 & 1 & \cdots & 0 & 0 & 0\\
\vdots & \vdots & \vdots & \vdots & \vdots & \vdots & \vdots & \vdots\\
0 & 0 & 0 & 0 & \cdots & 1 & 1 & 0\\
0 & 0 & 0 & 0 & \cdots & 1 & 1 & 1\\
0 & 0 & 0 & 0 & \cdots & 0 & 1 & 0
\end{bmatrix}
\text{ and }
\lambda_2 = 
\begin{bmatrix}
0 & 0 & 1 & 0 & \cdots & 0 & 0 & 0\\
0 & 1 & 1 & 1 & \cdots & 0 & 0 & 0\\
1 & 1 & 1 & 1 & \cdots & 0 & 0 & 0\\
0 & 1 & 1 & 1 & \cdots & 0 & 0 & 0\\
\vdots & \vdots & \vdots & \vdots & \vdots & \vdots & \vdots & \vdots\\
0 & 0 & 0 & 0 & \cdots & 1 & 1 & 1\\
0 & 0 & 0 & 0 & \cdots & 1 & 1 & 0\\
0 & 0 & 0 & 0 & \cdots & 1 & 0 & 0
\end{bmatrix}.
\]
\end{example}

Combining this with Theorem~\ref{thm HanMonsky} we get the following.
\begin{corollary}\label{cor matrices}
Let $k$ be a field of characteristic $p > 0$ and $R = k[[x_0, \ldots, x_d]]/(x_0^{n_0} + \cdots + x_{d}^{n_d})$.
For a fixed positive integer $n$, write $p = an + r$ with $0 \leq r < n$ and set $b = n-r$.
Consider the $p\times p$ matrix
\[
M_n = 
\begin{bNiceMatrix}[nullify-dots]
n 	    &	 \Cdots  &n   	  & r 	  & 0 	  &\Cdots  & \Cdots & 0 \\
\Vdots & \Iddots  & \Iddots & b        & \Ddots& \Ddots & 	          & \Vdots\\
n  	    & \Iddots 	&\Iddots  & r		  & \Ddots& 		   & 		    &  \Vdots \\
r 	    & b 	      & r 		  &          &  \Ddots&   	 	  &           & 0 \\
0 	    &  	      &\Ddots	  & \Ddots& 		  & 	r	   & b         & r \\
\Vdots & \Ddots 	& \Ddots	  & \Ddots& r		  & \Iddots&  \Iddots&  n\\
\Vdots &  	      & 	  	  & \Ddots& b	  & \Iddots&  \Iddots &   \Vdots \\
0 	    &  \Cdots &   	   	  & 	0	  & 	r 	  &  n 	   & \Cdots  &  n\\
\end{bNiceMatrix},
\]
where the ``triangular corners'' of `$n$' have length $a$,
the ``triangular corners'' of `$0$' have length $p-a-1$, the  
the middle rectangle has alternating `$r$' and `$b$'.

Then
$\dim_k R/(x_0^{p}, \ldots, x_d^{p}) = \prod^{d}_{i = 0} \left( (n_i - r_i)\delta_{a_i} + r_i \delta_{a_i + 1} \right)$
is the $(1,1)$-entry of the matrix product $M_{n_0} M_{n_1} \cdots M_{n_d}$.
\end{corollary}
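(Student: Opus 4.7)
The first equality is Theorem~\ref{thm HanMonsky} with $e = 1$. Writing $\alpha_n := (n - r)\delta_a + r\delta_{a+1}$ with $p = an + r$ and $b = n - r$, the remaining content is to identify $D_k\bigl(\prod_i \alpha_{n_i}\bigr)$ with the $(1,1)$-entry of $M_{n_0} \cdots M_{n_d}$. The plan is to pass through the matrix of multiplication by $\alpha_n$ in the $\lambda$-basis of $\Gamma_k$, and to recognise $M_n$ as that matrix after a diagonal sign conjugation.

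Using $\delta_m = \sum_{j=0}^{m-1}(-1)^j \lambda_j$ and $n = b + r$, the two telescoping contributions combine to give
\[
\alpha_n = n \sum_{j=0}^{a-1}(-1)^j \lambda_j + (-1)^a r \lambda_a.
\]
Let $A_n$ denote the matrix of multiplication by $\alpha_n$ on $\Gamma_k$ in the ordered basis $\lambda_0, \ldots, \lambda_{p-1}$. Since $\lambda_0$ is the identity of $\Gamma_k$, the first column of $A_{n_0} A_{n_1} \cdots A_{n_d}$ is the coordinate vector of $\prod_i \alpha_{n_i}$, whose leading entry is by definition $D_k\bigl(\prod_i \alpha_{n_i}\bigr)$. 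Hence it is enough to show that $M_n = D^{-1} A_n D$ for $D = \operatorname{diag}(1, -1, 1, -1, \ldots)$, since conjugation by $D$ commutes with matrix multiplication and fixes the $(1,1)$-entry.

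The key computation is thus the entrywise verification that $(-1)^{i+k}(A_n)_{i,k}$ equals the prescribed entry of $M_n$. From the Han--Monsky rules, in the unreflected regime $k + a \leq p - 1$ one obtains
\[
(A_n)_{i,k} = n \sum_{j \in J_{i,k}} (-1)^j + (-1)^a r \,\mathbf{1}[|i-k| \leq a \leq i+k], \quad J_{i,k} = \{0 \leq j \leq a-1 : |i - k| \leq j \leq i + k\}.
\]
Using the parity coincidence $|i - k| \equiv i + k \pmod{2}$ together with the telescoping identity $\sum_{j = S}^{T}(-1)^j = (-1)^S \mathbf{1}[T - S \text{ even}]$, the sign $(-1)^{i+k}$ cancels the alternations to produce exactly $n$ on the triangle $i + k < a$, the value $r$ along $i + k = a$ and on $|i - k| = a$, alternating $r$ and $b$ in the band $|i - k| < a < i + k$ (governed by the parity of $i + k - a$), and $0$ when $|i - k| > a$. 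When $k + a \geq p$ the reflection rule $\lambda_j \lambda_k = \lambda_{p - 1 - j}\lambda_{p - 1 - k}$ intervenes, and a parallel analysis yields the bottom-right triangle of $n$s together with the complementary zero region. The main obstacle is this case analysis, but each case reduces to a short parity computation.
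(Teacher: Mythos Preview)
Your argument is correct and follows the same line as the paper: expand $\alpha_n$ in the $\lambda$-basis, write down the multiplication matrix, and observe that it differs from $M_n$ only by the alternating-sign conjugation $D^{-1}(-)D$, which preserves products and the $(1,1)$-entry. The paper states these facts tersely, while you supply the explicit entrywise verification via the Han--Monsky rules and the parity cancellation; the underlying idea is identical.
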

\begin{proof}
We note that 
$(n - r)\delta_{a} + r \delta_{a + 1} = 
n(\lambda_0 - \lambda_1 + \cdots + (-1)^{a-1} \lambda_{a-1}) + r(-1)^a \lambda_a$
and the matrix of the multiplication 
by this element in the basis of $\{\lambda_i\}_{i =0}^{p-1}$ is 
$M_n$ with alternating signs:
\[
\begin{bNiceMatrix}[nullify-dots]
n 	    &	 -n   &  \Cdots  & (-1)^{a-1} n   	  & (-1)^{a} r 	& 0 		&\Cdots	& \Cdots & 0 \\
-n 	    & n   &   \cdots	  &	 (-1)^{a}r  	  &  (-1)^{a+1}b  	&  (-1)^{a+2} r   	& 0 & \cdots	 &0 \\
\vdots & \vdots & \vdots & \vdots & \vdots & \vdots & \vdots & \vdots & \vdots 
\end{bNiceMatrix}.
\]

Since the signs only depend on the parity and are identical in the matrices, 
the product of these matrices is again an alternating sign version of $M_{n_0} M_{n_1} \cdots M_{n_d}$.
\end{proof}

We now obtain the matrix form of Corollary~\ref{cor HanMonsky}.

\begin{corollary}\label{cor Matrix Form}
For an integer $a \geq 1$, define the square matrices $T_a$ and $N_a$ of size $(2a+1)$
\[
T_a = 
\begin{bNiceMatrix}[nullify-dots]
2 	    &	 \Cdots   &   2	 & 1   	  & 0  	&  \Cdots & 0  \\
\Vdots &	 \Iddots  & \Iddots& \Vdots & \Ddots&  \Ddots & \Vdots  \\
2 	    &	 \Iddots  &           &    	  &         &  \Ddots & 0  \\
1	    &	\Cdots  	& 		 & 1   	  &\Cdots&  		   & 1  \\
0	    &	  	       & 		 &\Vdots  &   	&  		   & 2  \\
\Vdots &	 \Ddots  	 & \Ddots&    	  &\Iddots&\Iddots   &\Vdots  \\
0	    &	\Cdots  	 & 0		 &    1	  & 2  	&\Cdots    & 2  \\
\end{bNiceMatrix} \text { and }
N_a = 
\begin{bNiceMatrix}[nullify-dots]
0 	    &	 \Cdots   &   0	 & 1   	  & 0  	&  \Cdots & 0  \\
\Vdots &	 \Iddots  & \Iddots& \Vdots & \Ddots&  \Ddots & \Vdots  \\
0 	    &	 \Iddots  &           &    	  &         &  \Ddots & 0  \\
1	    &	\Cdots  	& 		 & 1   	  &\Cdots&  		   & 1  \\
0	    &	  	       & 		 &\Vdots  &   	&  		   & 0  \\
\Vdots &	 \Ddots  	 & \Ddots&    	  &\Iddots&\Iddots   &\Vdots  \\
0	    &	\Cdots  	 & 0		 &    1	  & 0  	&\Cdots    & 0  \\
\end{bNiceMatrix}.
\]
Let $k$ be a field of characteristic $p > 2$.
If $a = (p - 1)/2$, then
\[
\ehk(A_{p,d}) = 
\ehk(k[[x_0, \ldots, x_d]]/(x_0^2 + \cdots + x_d^2)) = 1 + \frac{[T_a^{d+1}]_{(1,1)} - p^d}{p^d - [N_a^{d+1}]_{(1,1)}}.
\]
\end{corollary}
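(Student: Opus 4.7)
The plan is to combine Corollary~\ref{cor HanMonsky} with Corollary~\ref{cor matrices} specialized to $n_i = 2$ for every $i$, and to identify the matrices $T_a$ and $N_a$ in the statement as multiplication matrices in $\Gamma_k$ expressed in the basis $\{\lambda_j\}_{j=0}^{p-1}$.

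For the numerator, I would apply Corollary~\ref{cor matrices} with $n_i = 2$ for all $i$. Here $p = 2a + 1$, so the Euclidean division $p = 2\cdot a + 1$ gives $r = 1$, $b = n - r = 1$, and $a_i = a$ uniformly. The generic matrix $M_n$ of Corollary~\ref{cor matrices} then degenerates, for $n = 2$, into a $(2a+1)\times(2a+1)$ matrix with $2$'s in the two corner triangles of length $a$, $0$'s in the opposite corner triangles (also of length $p - a - 1 = a$), and $1$'s filling the entire middle band (since $r = b = 1$ makes the alternating $r$/$b$ pattern constantly $1$). This is precisely $T_a$. Since $(n_i - r_i)\delta_{a_i} + r_i\delta_{a_i + 1} = \delta_a + \delta_{a+1}$, the corollary gives
\[
D_k\bigl((\delta_a + \delta_{a+1})^{d+1}\bigr) = [T_a^{d+1}]_{(1,1)}.
\]

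For the denominator, I would first invoke the identity $\lambda_a = (-1)^a(\delta_{a+1} - \delta_a)$, so that $(-1)^{a(d+1)}(\delta_{a+1} - \delta_a)^{d+1} = \lambda_a^{d+1}$ and the second $D_k$ term in Corollary~\ref{cor HanMonsky} becomes $D_k(\lambda_a^{d+1})$. Then I would compute the multiplication matrix of $\lambda_a$ in the basis $\{\lambda_j\}_{j=0}^{p-1}$ directly from the Han--Monsky rules: for $0 \leq j \leq a$ the inequality $a + j \leq 2a = p - 1$ puts us in the first case and gives $\lambda_a\lambda_j = \lambda_{a-j} + \lambda_{a-j+1} + \cdots + \lambda_{a+j}$; for $a < j \leq p - 1$ the second case collapses $\lambda_a\lambda_j$ to $\lambda_a\lambda_{p-1-j}$, which reduces again to the first case with $j' = p-1-j < a$. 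In both regimes column $j$ has $1$'s precisely in rows $i$ with $|i - a| \leq \min(j,\, p-1-j)$, i.e.\ on the diamond $|i-a| + |j-a| \leq a$, with $0$'s in the four outer corner triangles. This matches $N_a$, so $D_k(\lambda_a^{d+1}) = [N_a^{d+1}]_{(1,1)}$. Substituting both identities into Corollary~\ref{cor HanMonsky} yields the claim.

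The main obstacle is the identification of $N_a$ with the multiplication matrix of $\lambda_a$: the pictorial description of $N_a$ in the statement is somewhat schematic, so one has to execute the two-case analysis for the Han--Monsky rules carefully and verify that the resulting support---a full diamond inscribed in the $(2a+1)\times(2a+1)$ square, with every interior entry equal to $1$ and with the middle row and middle column entirely $1$'s---matches exactly the displayed matrix rather than some sparser ``boundary only'' pattern that the \emph{iddots} and \emph{ddots} glyphs might suggest.
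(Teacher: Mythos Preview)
Your proposal is correct and is exactly the argument the paper has in mind: the corollary is stated without proof as ``the matrix form of Corollary~\ref{cor HanMonsky}'', and you have simply spelled out the two specializations. The numerator is Corollary~\ref{cor matrices} with $n_i=2$ (so $r=b=1$ and $M_2=T_a$), and for the denominator you correctly bypass Corollary~\ref{cor matrices} and instead read off the multiplication matrix of $\lambda_a$ straight from the Han--Monsky rules, obtaining the full diamond of $1$'s that is $N_a$; since $\lambda_0$ is the identity of $\Gamma_k$, the $\lambda_0$-coefficient of $\lambda_a^{d+1}=\lambda_a^{d+1}\cdot\lambda_0$ is $e_1^T N_a^{d+1} e_1=[N_a^{d+1}]_{(1,1)}$, with no sign issue because the Han--Monsky product formulas for $\lambda_a\lambda_j$ have only $+1$ coefficients.
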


\subsection{Main results}

The following lemma is a partial case and a warm-up result for the main proof. 

\begin{lemma}\label{lemma: first Fibonacci}
Fix a positive integer $n$ and define a $(2n+1) \times (2n+1)$-matrix 
\[
Z = 
\begin{bNiceMatrix}[nullify-dots]
1 	    &	 \Cdots   &   1	 & 0   	  & 1  	&  \Cdots & 1  \\
\Vdots &	 \Iddots  & \Iddots& \Vdots & \Ddots&  \Ddots & \Vdots  \\
1 	    &	 \Iddots  &           &    	  &         &  \Ddots & 1  \\
0	    &	\Cdots  	& 		 & 0   	  &\Cdots&  		   & 0  \\
1	    &	  	       & 		 &\Vdots  &   	&  		   & 1  \\
\Vdots &	 \Ddots  	 & \Ddots&    	  &\Iddots&\Iddots   &\Vdots  \\
1	    &	\Cdots  	 & 1		 &    0	  & 1  	&\Cdots    & 1  \\
\end{bNiceMatrix}.
\]
Then $[Z^{d+1}]_{(1,1)} = 2^d |(n-1)F_d|$, 
where $F_d$ denotes the $d$-dimensional Fibonacci polytope.
\end{lemma}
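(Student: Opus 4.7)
The strategy is to interpret $[Z^{d+1}]_{(1,1)}$ combinatorially as a count of closed walks and then fold these walks onto the lattice points of $(n-1)F_d$. Since the entries of $Z$ lie in $\{0,1\}$, the quantity $[Z^{d+1}]_{(1,1)}$ equals the number of sequences $(i_0,i_1,\ldots,i_{d+1})$ with $i_0=i_{d+1}=1$ and $Z(i_{k-1},i_k)=1$ for every $k=1,\ldots,d+1$. My first task is to encode the support of $Z$ in a useful way. I would introduce the folding map $\iota \colon \{1,\ldots,2n+1\}\setminus\{n+1\} \to \{1,\ldots,n\}$ defined by $\iota(i)=i$ for $1\leq i\leq n$ and $\iota(i)=2n+2-i$ for $n+2\leq i\leq 2n+1$, together with a sign $\epsilon(i)\in\{+1,-1\}$ recording which half of $\{1,\ldots,2n+1\}$ contains $i$. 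Inspection of the four triangular corners shows that $Z(i,j)=1$ if and only if $i,j\neq n+1$ and $\iota(i)+\iota(j)\leq n+1$; crucially, this condition involves only the reduced coordinates and is blind to the signs.

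Next I would reparametrize each walk by the pairs $(y_k,\epsilon_k) := (\iota(i_k),\epsilon(i_k))$ for $k=0,\ldots,d+1$. Since $i_0=i_{d+1}=1$, we have $(y_0,\epsilon_0)=(y_{d+1},\epsilon_{d+1})=(1,+1)$, while the intermediate signs $\epsilon_1,\ldots,\epsilon_d$ do not appear in any edge constraint and may be chosen independently, contributing a factor of $2^d$. The reduced walk $(y_1,\ldots,y_d)\in\{1,\ldots,n\}^d$ must then satisfy $y_{k-1}+y_k\leq n+1$ for $k=1,\ldots,d+1$, where the two boundary instances involving $y_0=y_{d+1}=1$ are automatic.

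Finally, the affine shift $y'_k := y_k-1 \in \{0,1,\ldots,n-1\}$ converts the remaining interior constraints $y_k+y_{k+1}\leq n+1$ into $y'_k+y'_{k+1}\leq n-1$, which are precisely the facet inequalities defining the integer points of $(n-1)F_d$. The reduced count is therefore $|(n-1)F_d|$, and combining this with the $2^d$ sign choices gives the asserted identity. The main obstacle is conceptual rather than technical: one must recognize that the four triangular corners of $Z$ are correctly captured by the single reduced inequality $\iota(i)+\iota(j)\leq n+1$, and that the sign variable $\epsilon$ plays no role in the adjacency; once this is seen, the factorization $2^d\cdot|(n-1)F_d|$ follows from a direct bookkeeping bijection.
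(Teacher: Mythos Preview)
Your proof is correct and follows essentially the same approach as the paper's own argument. The paper also expands $[Z^{d+1}]_{(1,1)}$ as a sum over tuples $(1,i_1,\ldots,i_d,1)$, identifies the tuples with all entries in the top-left corner with the integer points of $(n-1)F_d$ via the shift $i_k\mapsto i_k-1$, and then argues informally that at each step one has two symmetric choices of corner for $i_{k+1}$, yielding the factor $2^d$; your folding map $\iota$ and sign $\epsilon$ simply make this last step explicit.
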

\begin{proof}
By the definition of the matrix product, we may write
\[
[Z^{d+1}]_{(1,1)} = \sum_{1 \leq i_1, \ldots, i_d \leq 2n+1} Z_{(1, i_1)} Z_{(i_1, i_2)}\cdots Z_{(i_d, 1)}.
\]
Let $
S = \left\{ (1, i_1, \ldots, i_d, 1) \mid i_k \in [n], \quad i_k + i_{k+1} \leq n+1 \text{ for all } k \right \} 
$ and
observe that the map 
\[
S \ni (1, i_1, \ldots, i_d, 1) \mapsto (i_1-1, i_2-1, \ldots, i_d-1)
\]
is a bijection with the integer points in the $(n-1)$th dilation of the $d$-dimensional Fibonacci polytope $F_d$. Thus
it suffices to verify that the number of tuples $(1, i_1, \ldots, i_d, 1)$ such that $Z_{(1, i_1)} \cdots Z_{(i_d, 1)} \neq 0$ is $2^d |S|$.

Now, we note that $S$ corresponds to the products $Z_{(1, i_1)} \cdots Z_{(i_d, 1)}$ where 
each entry is in the top-left triangular block. 
On the other hand, if $Z_{i, i_{1}} \neq 0$ then it may belong to either the top-left (which $S$) or bottom-left triangular blocks. 
These choices are equivalent and, after making either choice, we 
will similarly have two possible corners for $i_2$ and so on. 
\end{proof}

\begin{theorem}\label{thm Fibonacci}
Let $F_d$ denote the $d$-dimensional Fibonacci polytope. 
Consider the matrix $T_n$ defined in Corollary~\ref{cor Matrix Form}.
Then for any positive integer $n$, we have
\[
[T_{n}^{d+1}]_{(1,1)} = (2n + 1)^{d} + 2^{d}|(n-1)F_d|.
\]
\end{theorem}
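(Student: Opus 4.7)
The plan is to generalize the reflection argument from Lemma~\ref{lemma: first Fibonacci}, now keeping track of the two nonzero values $1$ and $2$ appearing in $T_n$. I would begin by expanding
\[
[T_n^{d+1}]_{(1,1)} \;=\; \sum_{1 = i_0,\, i_1,\, \ldots,\, i_d,\, i_{d+1} = 1} \prod_{k=0}^{d} T_{(i_k,\, i_{k+1})},
\]
where each factor is $0$, $1$, or $2$ depending on which block of $T_n$ it lies in. Since $T_n$ is invariant under the $180^{\circ}$ involution $\sigma(i) = 2n+2-i$ applied simultaneously to rows and columns, I would fold each walk by $\bar i_k := \min\{i_k, \sigma(i_k)\} \in [1, n+1]$, calling a position $k$ \emph{middle} if $\bar i_k = n+1$ and \emph{free} otherwise. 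The endpoints $\bar i_0 = \bar i_{d+1} = 1$ are free and forced to be ``top''; every other free position admits two unfoldings, $i_k = \bar i_k$ (top) or $i_k = \sigma(\bar i_k)$ (bottom).

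The key local observation comes from a direct inspection of the block structure of $T_n$: for consecutive free positions $k$ and $k+1$ with $\bar i_k + \bar i_{k+1} \le n+1$ (call such an edge a \emph{$2$-edge}), the entry $T_{(i_k, i_{k+1})}$ equals $2$ if $i_k, i_{k+1}$ are chosen on the same side of the reflection and $0$ if on opposite sides. Every other nonzero edge contributes a factor of $1$ and imposes no constraint on top/bottom choices. Consequently, if $V$ denotes the set of free positions of a folded walk and $E_2$ the set of its $2$-edges, then the sum of contributions over valid unfoldings equals $2^{|E_2|} \cdot 2^{c - \ell}$, where $c = |V| - |E_2|$ is the number of connected components of the subgraph $(V, E_2)$ of a path and $\ell \in \{1, 2\}$ is the number of components containing an endpoint. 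This simplifies to the clean formula $2^{|V| - \ell}$, which is the technical heart of the argument.

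With this formula in hand, I would enumerate folded walks by the number $m$ of middle visits at internal positions, so that $|V| = d + 2 - m$, and observe that $\ell = 1$ holds precisely when $m = 0$ and every edge is a $2$-edge. Such Fibonacci-type walks are exactly those enumerated in Lemma~\ref{lemma: first Fibonacci}, hence counted by $|(n-1) F_d|$. Combining the three cases,
\[
[T_n^{d+1}]_{(1,1)} \;=\; 2^{d+1}|(n-1) F_d| + 2^d\bigl(n^d - |(n-1) F_d|\bigr) + \sum_{m=1}^{d} \binom{d}{m} n^{d-m}\, 2^{d-m},
\]
which collapses via the binomial identity $\sum_{m=0}^{d} \binom{d}{m} (2n)^{d-m} = (2n+1)^d$ to the desired $(2n+1)^d + 2^d |(n-1) F_d|$.

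The main obstacle I expect is verifying the local contribution formula $2^{|V| - \ell}$: this requires a careful case analysis of entries of $T_n$ on pairs $(i_k, i_{k+1})$ drawn from the top, middle, and bottom ranges, and checking that the $2$-edge constraints interact correctly with the reflection choices at each endpoint. Once this local combinatorial picture is pinned down, the global count is a straightforward telescoping.
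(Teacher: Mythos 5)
Your argument is correct, and while it exploits the same $180^\circ$ reflection symmetry of $T_n$ as the paper, the mechanism is genuinely different. The paper's proof constructs a procedure $P$ that sends each walk in $T_n$ with product $2^l$, $l < d+1$, to $2^l$ walks in the all-ones matrix $\mathbb{I}$ with at least one entry in the rhombus $\mathcal{R}$, establishes that $P$ covers all such $\mathbb{I}$-walks with disjoint images, and concludes by the identity $[\mathbb{I}^{d+1}]_{(1,1)} = (2n+1)^d$ together with Lemma~\ref{lemma: first Fibonacci} for the complementary count. Your proof folds every walk to a canonical representative in $[1,n+1]^{d+2}$ and computes the aggregated contribution per folded walk via the local formula $2^{|V|-\ell}$, which is cleanly independent of how the $2$-edges are distributed. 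Stratifying by the number $m$ of middle visits then gives the explicit three-term decomposition, and the binomial theorem closes it out. The two approaches are roughly inverse to each other (unfold vs.\ fold), but yours is arguably more self-contained: it needs no auxiliary matrix, no bijectivity claim, and the formula $2^{|V|-\ell}$ neatly absorbs all of the component bookkeeping. I verified that $\ell=1$ occurs precisely when $m=0$ and every edge of the path is a $2$-edge, that non-$2$-edges and middle-adjacent edges indeed impose no top/bottom constraint (so the valid unfoldings are exactly the consistent per-component choices), that the case $m=0$ count is $n^d$ with the all-$2$-edge subfamily being exactly $|(n-1)F_d|$, and that the final simplification via $\sum_m \binom{d}{m}(2n)^{d-m} = (2n+1)^d$ recovers the stated identity.
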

\begin{proof}
First, we observe that $(2n+1)^{d}$ is the upper-left entry of ${\mathbb I}^{d+1}$, where ${\mathbb I}$ is a $(2n+1) \times (2n+1)$ square matrix such that ${\mathbb I}_{(i, j)} = 1$ for all $i, j$. 
We will work with nonzero products in the expression 
\[
[T_n^{d+1}]_{(1,1)} = \sum_{1 \leq i_1, \ldots, i_{d} \leq 2n+1} t_{1, i_1} t_{i_1, i_2} \cdots t_{i_{d}, 1}.
\]
For convenience, we label the product $t_{1, i_1} t_{i_1, i_2} \cdots t_{i_{d}, 1}$ 
by the tuple of indices $(1, i_1, \ldots, i_{d}, 1)$. Similarly, we use a $(d+2)$-tuple $(j_0, \ldots, j_{d+1})$ to label a product of elements of $\mathbb{I}$. 
In correspondence with the shapes of $T_n$ and $N_n$, 
let us introduce $2$ regions: the first is the middle rhombus $\mathcal R$
and the second, $\mathcal C$, consists of the four corners of length $n$.
We will use these regions for the pairs of positive integers.  

Suppose that $t_{1, i_1} t_{i_1, i_2} \cdots t_{i_{d}, 1} \neq 0$.
Due to the structure of $T$, the product must be equal to 
$2^l$ for some $0 \leq l \leq d+ 1$. 
The number of products (equivalently, tuples) which are equal to $2^{d+1}$ is counted easily: 
in this case $t_{i_k, i_{k + 1}}$ must always belong to the upper left corner of $T_n$, so there are $|(n-1)F_d|$ such tuples by the proof of Lemma~\ref{lemma: first Fibonacci}. 

We now may assume that $l \neq d+1$. We give a procedure $P$ that from a given tuple $(1, i_1, \ldots, i_{d}, 1)$ will produce $2^l$ tuples $(j_0, \ldots, j_{d+1})$ (i.e., $2^l$ elements of ${\mathbb I}^{d+1}$). 
\begin{procedure}
 Let $k_0$ be the smallest integer $k$ such that $t_{i_k, i_{k+1}} = 1$. 
We now analyze all $i_0, i_1, \ldots, i_{d+1}$ starting from the left: 
\begin{enumerate}
\item set $j_{d+1} = j_0 = 1$;
\item if $m < k_0$ and $t_{i_{m},i_{m+1}} = 2$, then we put two values for $j_{m+1}$: $i_{m+1}$ and $2n+2 - i_{m+1}$; 
\item if $m > k_0$ and $t_{i_{m}, i_{m+1}} = 2$, then we put two values for $j_{m}$: $i_m$ and $2n+2 - i_{m}$;
\item if $m > k_0$ and $t_{i_{m}, i_{m+1}} = 1$ (i.e., $(i_{m}, i_{m+1}) \in \mathcal{C}$ ), then $j_{m} = i_{m}$.
\end{enumerate}
\end{procedure}

\begin{claim}
The map $P$ has the following properties:
\begin{itemize}
\item[(a)] the image of $P$ covers all tuples $(1, j_1, \ldots, j_{d}, 1)$
corresponding to $(d + 1)$-fold products 
of elements in $\mathbb I$ with at least one entry in $\mathcal R$;
\item[(b)] if $(1, i_1, \ldots, i_{d}, 1) \neq (1, i_1', \ldots, i_{d}', 1)$
then $P(1, i_1, \ldots, i_{d}, 1) \cap P(1, i_1', \ldots, i_{d}', 1) = \emptyset$.
\end{itemize}
\end{claim}
\begin{proof}
First, observe that for a tuple $(1, j_1, \ldots, j_{d}, 1) \in P(1, i_1, \ldots, i_{d}, 1)$, we have 
$(j_m, j_{m + 1}) \in \mathcal{R}$ if and only if 
 $(i_m, i_{m + 1}) \in \mathcal{R}$ (hence, it belongs to the rhombus of $T_n$). 
This is due to the symmetry: 
the transformation 
\[
(i_0, i_1, \ldots, i_m, \ldots, i_{d + 1}) \mapsto  (i_0, i_1, \ldots, 2n+2-i_m, \ldots, i_{d + 1})
\]
reflects $(i_{m-1}, i_m)$ horizontally and $(i_{m}, i_{m + 1})$ vertically,
but the two regions, $\mathcal R$ and $\mathcal C$, are stable under reflections. Note that the distinguished entry $t_{i_{k_0}, i_{k_0 + 1}} = 1$ is no different, since the values $i_{k_0}$ and $i_{k_0 + 1}$ are operated with while checking $t_{i_{k_0 - 1}, i_{k_0}}$ and $t_{i_{k_0 + 1}, i_{k_0 + 2}}$. In particular, the image of $P$ contains only tuples with at least one entry in the rhombus $\mathcal R$. 

Now, in order to prove the two claims it suffices to show that given a tuple $(1, j_1, \ldots, j_{d}, 1)$
there is a unique tuple $(1, i_1, \ldots, i_{d}, 1)$ such that  
$t_{1, i_1} \cdots t_{i_{d}, 1} \neq 0$ and $(1, j_1, \ldots, j_{d}, 1) \in P(1, i_1, \ldots, i_{d}, 1)$. 
This is essentially due to the fact that $T_n$ has two corners filled with $0$s, 
so there is no ambiguity arising in steps $(2), (3)$. 
Namely, we start by observing that $k_0$ is the smallest integer $k$ such that $\mathbb{I}_{(j_k, j_{k+1})} \in \mathcal R$ due to the preservation of the rhombus $\mathcal{R}$. We know that $t_{i_m, i_{m + 1}} = 2$ for all $k_0 > m$. Thus, starting with $m = 1$, we set $i_{m} = \min\{j_m, 2n+2-j_m\}$ so that the entry $(i_{m-1}, i_{m})$ belongs to the top-left corner -- this is the inverse to the step (2) of the construction of $P$. This proceeds until $i_{k_0}$, but $i_{k_0 + 1}$ cannot be recovered this way. Instead, we approach it starting from the tail.
Set $i_{d+1} = 1$ and proceed inductively reversing steps (3) and (4) of the construction of $P$. Given $i_{m+1}$, $(j_{m}, j_{m+1})$ determines $i_{m}$: if $(j_{m}, j_{m+1}) \in \mathcal R$ then we set $i_{m} = j_m$; otherwise, 
we set $i_{m} = \min\{j_m, 2n+2-j_m\}$ when $i_{m+1} \leq n$ 
or $i_{m} = \max \{j_m, 2n+2-j_m\}$ if $i_{m+1} > n$ (so that $(i_{m}, i_{m+1})$ is either in the top-left or the bottom-right corners). 
We proceed this way until reaching $m = k + 1$ in which case all values are determined. 
By the construction, we see now that $(1, j_1, \ldots, j_{d}, 1) \in P(1, i_1, \ldots, i_{d}, 1)$.
\end{proof}

The claim now shows that $[T_n^{d + 1}]_{(1,1)} - 2^{d+1}|(n-1)F_d|$ is the number of all 
tuples $(1, j_1, \ldots, j_d, 1)$ that contain at least one entry in the rhombus $\mathcal {R}$. 
It remains to count the number of tuples that have no entry in $\mathcal{R}$.
Lemma~\ref{lemma: first Fibonacci} shows that the number of such tuples is $2^d|(n-1)F_d|$.
It now follows that 
\[(2n+1)^d = [\mathbb I^{d+1}]_{1,1} = 
[T_n^{d + 1}]_{(1,1)} - 2^{d}|(n-1)F_d|.
\]
\end{proof}

\begin{corollary}\label{cor main Ehrhart}
Let $F_d$ and $E_d$ denote the Fibonacci and extended Fibonacci polytopes. 
If $k$ is a field of characteristic $p > 2$, then
\[
\ehk(k[[x_0, \ldots, x_d]]/(x_0^2 + \cdots + x_d^2))
= 1 + \frac{2^d |\frac{p-3}{2}F_d|}{p^d - |\frac{p-1}{2}E_{d-2}|}.
\]
\end{corollary}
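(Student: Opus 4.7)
The plan is to chain Corollary~\ref{cor Matrix Form}, Theorem~\ref{thm Fibonacci}, and a new but directly analogous analysis of the matrix $N_a$. With $a = (p-1)/2$ so that $p = 2a+1$, Corollary~\ref{cor Matrix Form} presents $\ehk(A_{p,d})$ as
\[
1 + \frac{[T_a^{d+1}]_{(1,1)} - (2a+1)^d}{(2a+1)^d - [N_a^{d+1}]_{(1,1)}},
\]
and Theorem~\ref{thm Fibonacci} immediately rewrites the numerator as $2^d |(a-1)F_d|$. So the only remaining task is to establish the denominator identity $[N_a^{d+1}]_{(1,1)} = |aE_{d-2}|$.

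For this I would exploit the fact that every nonzero entry of $N_n$ equals $1$ and that its support is the $\ell^1$-rhombus
\[
\mathcal{R} = \{(i,j) \in [2n+1]^2 : |i-(n+1)| + |j-(n+1)| \leq n\}
\]
centered at $(n+1, n+1)$. Consequently $[N_n^{d+1}]_{(1,1)}$ counts tuples $(i_0 = 1, i_1, \ldots, i_d, i_{d+1} = 1)$ whose consecutive pairs all lie in $\mathcal R$. Setting $y_k = i_k - (n+1)$ turns the rhombus condition into $|y_k| + |y_{k+1}| \leq n$, with fixed boundary values $y_0 = y_{d+1} = -n$. The outermost inequalities $n + |y_1| \leq n$ and $|y_d| + n \leq n$ then force $y_1 = y_d = 0$, and the remaining free coordinates $y_2, \ldots, y_{d-1}$ are governed by $|y_k| + |y_{k+1}| \leq n$ for $k = 2, \ldots, d-2$ together with $|y_2|, |y_{d-1}| \leq n$. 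These are precisely the defining inequalities for the integer points of $n E_{d-2}$ (the interior bounds $|y_k| \leq n$ being automatic from adjacent constraints), so specialising to $n = a$ gives the desired identity.

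The main obstacle I expect is keeping the two dilation indices straight: the Fibonacci polytope enters with dilation $a-1$ because the corner region of $T_n$ has width $n$ but pinning the starting coordinate to $1$ effectively costs one unit, whereas the extended Fibonacci polytope enters with dilation $a$ because the rhombus of $N_n$ has full radius $n$ and the boundary pinning only fixes the neighbors of the endpoints ($i_1 = i_d = n+1$) rather than shrinking the freedom of the interior. A minor secondary point is checking the degenerate low-dimensional cases — for instance, when $d = 2$ one has $E_0$ a single point matching $[N_a^3]_{(1,1)} = 1$, and for $d = 3$ the count $2a+1$ matches $|aE_1|$ — but these can be handled by direct computation.
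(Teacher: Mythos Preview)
Your proposal is correct and follows essentially the same route as the paper: invoke Corollary~\ref{cor Matrix Form}, handle the numerator via Theorem~\ref{thm Fibonacci}, and for the denominator shift indices to center the rhombus support of $N_a$ at the origin so that the endpoint constraints force $y_1=y_d=0$ and the remaining $d-2$ coordinates trace out $aE_{d-2}$. The paper's version is terser (it does the index shift in one line and omits the low-dimensional checks), but the argument is the same.
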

\begin{proof}
Set $a = (p-1)/2$.
By Corollary~\ref{cor Matrix Form},
\[
\ehk(k[[x_0, \ldots, x_d]]/(x_0^2 + \cdots + x_d^2))
= 1 + \frac{[T_a^{d+1}]_{(1,1)} - (2a+1)^d}{(2a+1)^d - [N_a^{d+1}]_{(1,1)}}.
\]
By Theorem~\ref{thm Fibonacci} the numerator has the required form. 
As for the denominator, we interpret 
\[
[N_a^{d+1}]_{(1,1)} = \sum_{-a \leq i_1, \ldots, i_d \leq a} n_{1, a+1 + i_1}\cdots n_{a+1 + i_d, 1}.
\]
Since we need all $n_{i,j} \neq 0$, we must have $i_1 = i_d = 0$ and the remaining indices clearly correspond to the integer points 
in the $a$-dilation of the extended Fibonacci polytope. 
\end{proof}

\begin{example}
By Theorem~\ref{thm Fibonacci} and Remark~\ref{Jacobsthal}, we see that 
\[
\ehk(\mathbb{F}_3[[x_0, \ldots, x_d]]/(x_0^2 + \cdots + x_d^2)) = 1 + \frac{2^d3}{3^{d+1} - 2^d + (-1)^d}
\]
giving the sequence $2, 3/2, 4/3, 23/19, \ldots$. 

\end{example}

It immediately follows from Corollary~\ref{cor main Ehrhart}
that $p \mapsto \ehk(\mathbb{F}_p[[x_0, \ldots, x_d]]/(x_0^2 + \cdots + x_d^2))$ is a rational function of degree $d$. 
In fact, this function is even. 
The idea for the following proof was suggested by Akihiro Higashitani. 

\begin{corollary}\label{cor: rational function}
The function 
$
p \mapsto 
\ehk(\mathbb{F}_p[[x_0, \ldots, x_d]]/(x_0^2 + \cdots + x_d^2))
$
is rational. Moreover, it can be written as $f(p^2)/g(p^2)$ 
where $f, g$ are polynomials of degree $\lfloor d/2 \rfloor - 1$. 
\end{corollary}
\begin{proof}
By Lemma~\ref{lem: Gorenstein} the $h^*$ polynomial $h_{F_d}(x)$  of $F_d(x)$ (resp., $h_{E_d}$ of $E_d$) 
is a symmetric polynomial of degree $d-2$ (resp., $d$). 
Therefore, the Ehrhart polynomials $P_{F_d} (x)$ and 
$P_{E_d} (x)$ satisfy the equations
\begin{equation}\label{eq: ehr symmetry}
P_{F_d}(x) = (-1)^{d} P_{F_d} (-x-3) \text{ and }
P_{E_d}(x) = (-1)^{d} P_{E_d}(-x-1).
\end{equation}
It now follows from Corollary~\ref{cor main Ehrhart}
that the numerator and denominator of the rational function
\[
p \mapsto \ehk(\mathbb{F}_p[[x_0, \ldots, x_d]]/(x_0^2 + \cdots + x_d^2))  - 1
= \frac{2^d P_{F_d} (p/2 - 3/2) }{p^d - P_{E_{d-2}} (p/2 - 1/2)}
\]
are simultaneously even or odd functions in $p$, depending on whether $d$ is even or odd. Therefore, the entire expression is a rational function in $p^2$
given by polynomials of degree $\lfloor d/2 \rfloor$. 

In order to cut down the degree, we observe that the numerator and the denominator
are divisible by $p^2 - 1$. 
In the numerator, by symmetry in (\ref{eq: ehr symmetry}), it suffices to show that $P_{F_d}(-1) = 0$.
This follows from the Ehrhart--Macdonald reciprocity since 
$F_d$ does not have interior lattice points. 
As for the denominator, it suffices to note that 
$P_{E_d}(-1) = (-1)^d$ by the Ehrhart--Macdonald reciprocity
and $P_{E_d}(0) = 1$ by a standard property of Ehrhart polynomials. 
\end{proof}

\begin{remark}\label{remark algorithm}
One can compute this rational function via polynomial interpolation using the first $d+1$ values for the numerator and denominator polynomials, obtained by evaluating $T_k^{d+1}$ and $N_k^{d+1}$ for $k = 1, \ldots, d+1$. 

Alternatively, the initial values of the Ehrhart polynomial of the Fibonacci polytope can be computed using a recursion\footnote{Is there a similar recursion for the extended Fibonacci polytope?} in \cite[Proposition~3.17]{PetersenZhuang}, 
and the interpolation can be bypassed using the method of \cite[Remark~3.18]{PetersenZhuang}. Another option is to use the recursive algorithm due to Kreweras (\cite{Kreweras}, \cite[A079502]{oeis}).
\end{remark}

From Corollary~\ref{cor main Ehrhart} we recover an unpublished result of Gessel and Monsky (\cite{GesselMonsky}, see also \cite[Theorem~4.1]{BrennerLiMiller}) and improve its convergence estimate.

\begin{corollary}\label{cor convergence rate}
Let $k$ be a field of characteristic $p > 2$. Then
\[
\ehk(A_{p,d}) := \ehk(k[[x_0, \ldots, x_d]]/(x_0^2 + \cdots + x_d^2))
= 1 + \frac{\En{d}}{d!} + O(p^{-2}).
\]
\end{corollary}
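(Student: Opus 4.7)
The plan is to feed the closed form from Corollary~\ref{cor main Ehrhart} into the expansion of the Ehrhart polynomial of the Fibonacci polytope recorded in Corollary~\ref{c Ehrhart expand}, and then observe that two separate $O(p^{d-1})$ contributions miraculously do not appear.

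\medskip

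First, set $a = (p-1)/2$, so $a - 1 = (p-3)/2$ and $2a+1 = p$. By Corollary~\ref{cor main Ehrhart},
\[
\ehk(A_{p,d}) - 1 \;=\; \frac{2^d P_d\!\bigl((p-3)/2\bigr)}{p^d - |aE_{d-2}|},
\]
where $P_d(k) = |kF_d|$. The extended Fibonacci Ehrhart polynomial $|aE_{d-2}|$ has degree $d-2$ in $a$, so $|aE_{d-2}| = O(p^{d-2})$, and hence the denominator is $p^d\bigl(1 - O(p^{-2})\bigr)$.

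\medskip

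For the numerator I would invoke the shifted expansion in Corollary~\ref{c Ehrhart expand}:
\[
P_d\!\left(x - \tfrac{3}{2}\right) \;=\; \frac{\En{d}}{d!}\, x^d \;+\; O(x^{d-2}).
\]
Setting $x = p/2$ we have $x - 3/2 = (p-3)/2$, so
\[
2^d P_d\!\bigl((p-3)/2\bigr) \;=\; 2^d\!\left(\frac{\En{d}}{d!}\,(p/2)^d + O(p^{d-2})\right) \;=\; \frac{\En{d}}{d!}\, p^d \;+\; O(p^{d-2}).
\]
The crucial point is that this expression has no $p^{d-1}$ term: the shift by $3/2$ (which is exactly the average of $0$ and $3$, i.e., the midpoint between the naive dilation argument $(p-3)/2$ and the leading term $p/2$) kills the next-order coefficient, as already exploited in the second displayed formula of Corollary~\ref{c Ehrhart expand}.

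\medskip

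Combining the two estimates gives
\[
\ehk(A_{p,d}) - 1 \;=\; \frac{\dfrac{\En{d}}{d!}\, p^d + O(p^{d-2})}{p^d\bigl(1 - O(p^{-2})\bigr)} \;=\; \left(\frac{\En{d}}{d!} + O(p^{-2})\right)\!\bigl(1 + O(p^{-2})\bigr) \;=\; \frac{\En{d}}{d!} + O(p^{-2}),
\]
which is the claim. There is essentially no hard step here beyond recognizing the correct shift: the whole result is a one-line consequence of Corollary~\ref{cor main Ehrhart} together with the vanishing of the $x^{d-1}$ coefficient in $P_d(x-3/2)$. Sharpening the error to $O(p^{-2})$ (rather than the weaker $O(p^{-1})$ that a blunt substitution $P_d(k) = \En{d}k^d/d! + O(k^{d-1})$ would give) is exactly what this shift provides, and it is the only place in the argument where the special structure of the Fibonacci polytope's Ehrhart polynomial enters in a nontrivial way.
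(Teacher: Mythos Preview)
Your proof is correct and follows essentially the same approach as the paper: both arguments feed Corollary~\ref{cor main Ehrhart} into the shifted expansion of Corollary~\ref{c Ehrhart expand}, using that the $x^{d-1}$ term of $P_d(x-3/2)$ vanishes to upgrade the error from $O(p^{-1})$ to $O(p^{-2})$, and that $|aE_{d-2}|$ is only $O(p^{d-2})$. The paper's version is terser but the substance is identical.
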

\begin{proof}
By Corollary~\ref{c Ehrhart expand} 
\[
2^d|\frac{p-3}{2}F_d| = \frac{\En{d}}{d!} p^d  + O(p^{d-2}).
\]
\end{proof}

\begin{corollary}\label{cor: monotone function}
For any field $k$ of characteristic $p > 2$ and any $d \geq 0$, we have
\[
\ehk(A_{p,d}) > \ehk(A_{p,d+1}).
\]
\end{corollary}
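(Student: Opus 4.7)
My plan is to exploit the explicit formula from Corollary~\ref{cor main Ehrhart} and reduce the monotonicity to a clean positivity identity built from one-coordinate-at-a-time recursions for the Ehrhart polynomials $F_d$ and $E_d$.

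Set $\alpha_d = 2^d F_d(a-1)$ and $\beta_d = p^d - E_{d-2}(a)$, so that Corollary~\ref{cor main Ehrhart} reads $\ehk(A_{p,d}) = 1 + \alpha_d/\beta_d$. The denominators $\beta_d$ and $\beta_{d+1}$ are positive because $E_{d-2} \subset [-1,1]^{d-2}$ forces $E_{d-2}(a) \le (2a+1)^{d-2} = p^{d-2}$. Hence the claim is equivalent to
\[
\alpha_d \beta_{d+1} - \alpha_{d+1} \beta_d > 0.
\]
I next derive two recursions by conditioning on the last coordinate. Given $(x_1,\ldots,x_d) \in kF_d$, the new coordinate satisfies $0 \le x_{d+1} \le k - x_d$, which upon summation gives
\[
F_{d+1}(k) = F_d(k)\bigl(k+1 - \mu_d(k)\bigr),
\]
where $\mu_d(k)$ is the average of the last coordinate over integer points of $kF_d$. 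The analogous constraint $|y_{d+1}| \le k - |y_d|$ for the extended polytope yields
\[
E_{d+1}(k) = E_d(k)\bigl(2k+1 - 2\nu_d(k)\bigr),
\]
where $\nu_d(k)$ averages $|y_d|$ over integer points of $kE_d$. Using $p = 2a+1$, these specialize to $pF_d(a-1) - 2F_{d+1}(a-1) = F_d(a-1)(1 + 2\mu_d(a-1))$ and, for $d \ge 2$, $pE_{d-2}(a) - E_{d-1}(a) = 2E_{d-2}(a)\nu_{d-2}(a)$.

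Plugging both identities into the expansion
\[
\alpha_d \beta_{d+1} - \alpha_{d+1} \beta_d = 2^d\bigl[p^d\bigl(pF_d(a{-}1) - 2F_{d+1}(a{-}1)\bigr) + 2F_{d+1}(a{-}1)E_{d-2}(a) - F_d(a{-}1)E_{d-1}(a)\bigr]
\]
and using $2a = p-1$ to collapse through the key algebraic identity $2(a - \mu_d) - (p - 2\nu_{d-2}) = -(1 + 2\mu_d) + 2\nu_{d-2}$, I reach the clean form
\[
\alpha_d \beta_{d+1} - \alpha_{d+1} \beta_d = 2^d F_d(a-1)\Bigl[\bigl(1 + 2\mu_d(a-1)\bigr)\bigl(p^d - E_{d-2}(a)\bigr) + 2 E_{d-2}(a)\,\nu_{d-2}(a)\Bigr].
\]
Every factor on the right is non-negative, and the first bracketed summand is strictly positive (by the same bound $E_{d-2}(a) \le p^{d-2} < p^d$), completing the argument.

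The main obstacle in this plan is the base case $d = 1$, where $E_{-1}(a)$ and the extended-Fibonacci recursion lack literal meaning. I would dispatch it by direct computation: with $F_1(a-1) = a$, $F_2(a-1) = \binom{a+1}{2}$, and the natural conventions $E_{-1}(a) = E_0(a) = 1$, one gets $\alpha_1 \beta_2 - \alpha_2 \beta_1 = a(p-1)(p+1) > 0$. Equivalently, the convention $\nu_{-1}(a) = a$ makes the general formula above valid uniformly for all $d \ge 1$.
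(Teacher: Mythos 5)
Your proposal is correct and proceeds from the same starting point as the paper, namely the formula of Corollary~\ref{cor main Ehrhart} and the fibration of the $(d{+}1)$-dimensional Fibonacci and extended Fibonacci polytopes over the $d$-dimensional ones obtained by forgetting the last coordinate. The paper turns that fibration into the one-sided bounds $|(a-1)F_{d+1}| < a\,|(a-1)F_d|$ and $|aE_{d-1}| < (2a+1)\,|aE_{d-2}|$, normalizes by $p^d$, and compares numerator and denominator separately. You instead keep the fibration exact via the averages $\mu_d$ and $\nu_d$, cross-multiply, and exhibit $\alpha_d\beta_{d+1}-\alpha_{d+1}\beta_d$ as the manifestly positive quantity $2^d F_d(a-1)\bigl[(1+2\mu_d)(p^d-E_{d-2}(a))+2E_{d-2}(a)\nu_{d-2}\bigr]$; I checked the algebra and the base case $d=1$, and both are right. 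This sharpening is not merely cosmetic: the strict inequality $|(a-1)F_{d+1}| < a\,|(a-1)F_d|$ that the paper relies on actually fails when $a=1$ (both sides equal $1$), whereas in your identity strictness comes uniformly from the summand $(1+2\mu_d)(p^d-E_{d-2}(a))>0$, so the case $p=3$ needs no separate treatment. One point worth flagging, which affects the paper's proof equally: the corollary is stated for $d\ge 0$, but $\ehk(A_{p,0})=\ehk(A_{p,1})=2$ for every $p>2$, so the inequality degenerates to an equality at $d=0$; your argument, anchored at the base case $d=1$, proves the claim exactly on the range $d\ge 1$ where it holds.
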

\begin{proof}
We apply the formula in Corollary~\ref{cor main Ehrhart}. First, observe that $|aF_{d+1}| < (a+1) |aF_{d}|$. This follows from the definition: 
we add $x_{d+1}$ such that $x_{d} + x_{d+1} \leq a$, so there are at most $(a+1)$ options but some are not viable. 
Hence 
\[
\frac{2^{d+1} |(a-1)F_{d+1}|}{(2a+1)^{d+1}} < \frac{(2a) 2^d |(a-1)F_d| }{(2a+1)^{d+1}}
< \frac{2^d|(a-1)F_d|}{(2a+1)^{d}}.
\]

Similarly, $|aE_{d+1}| < (2a+1)|aE_{d}|$ and, therefore, 
$\frac{|aE_{d-1}|}{(2a+1)^{d+1}} < \frac{(2a+1)|aE_{d-2}| }{(2a+1)^{d+1}}.$
Thus  for $a = (p-1)/2$ we have
\begin{align*}
\ehk(A_{p,d+1})
&= 1 + \frac{2^{d+1} |(a-1)F_{d+1}|}{(2a+1)^{d+1} - |aE_{d-1}|}
= 1 + \frac{2^{d+1} |(a-1)F_{d+1}|/(2a+1)^{d+1}}{1 - |aE_{d-1}|/(2a+1)^{d+1}}\\
&< 1 + \frac{2^{d} |(a-1)F_{d}|/(2a+1)^{d}}{1 - |aE_{d-2}|/(2a+1)^{d}}
=\ehk(A_{p,d}).
\end{align*}
\end{proof}

\begin{remark}
The corollary also holds for $p = 2$ by a direct computation performed in \cite{CastilloRey}.
Notably, this result demonstrates that $\ehk(A_{p,d-1}) = \ehk(A_{p,d}/(L)) > \ehk(A_{p,d})$ for {\it every} linear form $L$. Thus, there is no analogue of \emph{superficial} elements for Hilbert--Kunz multiplicity
(recall that $\eh(R) = \eh(R/(L))$ when $L$ is superficial). 
\end{remark}

As the last corollary we strengthen an easy reduction for the Watanabe--Yoshida conjecture observed in \cite{EnescuShimomoto}.

\begin{corollary}
It suffices to prove Conjecture~\ref{conj WY 1} for isolated singularities. 
\end{corollary}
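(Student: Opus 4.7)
The plan is to combine the Enescu--Shimomoto localization reduction with the strict monotonicity of $d \mapsto \ehk(A_{p,d})$ newly established in Corollary~\ref{cor: monotone function}. Assume Conjecture~\ref{conj WY 1} is known for all isolated singularities, and let $(R, \mf m)$ be an unmixed local ring of dimension $d$, characteristic $p > 2$, with $\ehk(R) \neq 1$, which is not an isolated singularity; the goal is to derive both parts of the conjecture from the isolated case.

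First, I would choose a prime $\mf p \subsetneq \mf m$ that is a minimal element of the non-regular locus of $R$; such $\mf p$ exists precisely because $R$ is not isolated. By minimality of $\mf p$, the localization $R_\mf p$ is itself an isolated, non-regular local ring of dimension $h := \hght \mf p < d$, and under standard excellence assumptions $R_\mf p$ inherits unmixedness, so in particular $\ehk(R_\mf p) \neq 1$. I would then invoke the classical fact that Hilbert--Kunz multiplicity is non-increasing under localization, $\ehk(R) \geq \ehk(R_\mf p)$, and apply the isolated case of the conjecture to $R_\mf p$ to conclude $\ehk(R_\mf p) \geq \ehk(A_{p,h})$. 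Chaining with Corollary~\ref{cor: monotone function} iterated $d - h$ times, which yields $\ehk(A_{p,h}) > \ehk(A_{p,d})$, produces the strict bound $\ehk(R) > \ehk(A_{p,d})$. This establishes part (1) in the non-isolated case, and the strictness forces $R$ to be isolated whenever $\ehk(R) = \ehk(A_{p,d})$, reducing part (2) to the isolated case as well.

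The main technical point is securing $\ehk(R) \geq \ehk(R_\mf p)$ in the generality needed; this is a classical tool in the Hilbert--Kunz literature but requires care about excellence and behavior under completion, and it is already the backbone of the \cite{EnescuShimomoto} reduction, hence not a new obstacle. The genuinely new ingredient that makes the reduction clean is the strict inequality in Corollary~\ref{cor: monotone function}: without it, the chain of inequalities could collapse into equalities and one could not conclude that $R$ must be isolated in the equality case of part (2). No further analysis of the Ehrhart polynomials is required for this corollary.
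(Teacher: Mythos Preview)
Your approach is essentially identical to the paper's: pick a minimal prime $\mf p \neq \mf m$ of the singular locus, use the localization inequality $\ehk(R) \geq \ehk(R_{\mf p})$, apply the isolated case to $R_{\mf p}$, and finish with the strict monotonicity of Corollary~\ref{cor: monotone function}. The only minor difference is that the paper first passes to the completion (so that the regular locus is open and the localization inequality applies via equidimensionality) rather than invoking ``standard excellence assumptions,'' and the paper leaves the handling of part~(2) implicit in the strict inequality, exactly as you do.
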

\begin{proof}
Suppose that the Watanabe--Yoshida conjecture holds for isolated singularities. 
Let $(R, \mf m)$ be a formally unmixed local ring of positive characteristic $p > 0$. Since Hilbert--Kunz multiplicity does not change upon completion, we may assume that $R$ is complete. Hence its regular locus is open. Let $\mf p \neq \mf m$ be a minimal prime of its singular locus.
Since $R$ is equidimensional, we have $\ehk(R) \geq \ehk(R_\mf p) \geq \ehk(A_{p,\dim R_\mf p}) > \ehk(A_{p,\dim R})$. 
\end{proof}

We now recover \cite[Theorem~8.6]{Trivedi}. 

\begin{proposition}
For any $d \geq 4$ the function $p \mapsto \ehk(A_{p,d})$ is strictly decreasing for $p \gg 0$. 
\end{proposition}
\begin{proof}
Plugging $x = p/2$ in the last formula of the proof of 
Corollary~\ref{cor: rational function}, it suffices to show that 
\[
x \mapsto \frac{P_{F_d} (x - 3/2) }{x^d - P_{E_{d-2}} (x - 1/2)}
\]
is eventually decreasing. 
By \cite[Corollary~5.2]{FMP} the coefficients of $P_{F_d} (k-1)$, 
i.e., the order polynomial of the zigzag poset, are non-negative.
Hence, we write using Corollary~\ref{c Ehrhart expand}
we may write for some $c_2 \geq 0$  that
\[
P_{F_d} (k-1) := \frac{\En{d}}{d!} k^d + \frac{1}{2} \frac{\En{d}}{(d-1)!} k^{d - 1} + c_2 k^{d-2} + \cdots. 
\]
Then 
\begin{align*}
\frac{P_{F_d} (x - 3/2) }{x^d - P_{E_{d-2}} (x - 1/2)}
&= \frac{\frac{\En{d}}{d!} x^d + \left (c_2  - \frac{1}{8}\frac{\En{d}}{(d-2)!} \right) x^{d-2} + O(x^{d-4})}
{x^d - 2^{d-2}\frac{\En{d-2}}{(d-2)!} x^{d-2} + O(x^{d-4})}
\\ &= \frac{\En{d}}{d!} + \left(c_2 - \frac{1}{8}\frac{\En{d}}{(d-2)!} + 2^{d-2}\frac{\En{d-2}}{(d-2)!}\frac{\En{d}}{d!} \right)x^{-2} + O(x^{-4}).
\end{align*}

\begin{claim}
For $n \geq 12$ we have  $2^n\frac{\En{n}}{n!} > \frac{(n+1)(n+2)}{8}$.
\end{claim}
\begin{proof}
By the well-known formula (e.g., \cite[page 4]{StanleySurvey})
\[
2^n \frac{\En{n}}{n!} = \left ( \frac{4}{\pi} \right)^{n+1} \sum_{k \geq 0} (-1)^{k(n+1)} \frac{1}{(2k+1)^{n+1}}, 
\]
in particular, $2^n \frac{\En{n}}{n!}  \geq  \left ( \frac{4}{\pi} \right)^{n+1} (1 - 3^{-n-1})$. Now, we use that the sequence 
$x_n := \left ( \frac{4}{\pi} \right)^{n+1} (1 - 3^{-n-1})$ 
grows much faster than $y_n := (n+1)(n+2)/8$. Namely, 
we verify that $x_{12} > y_{12}$ directly and then 
observe that  
\[
\frac{x_{n+1}}{x_n} > \frac{4}{\pi} > 1.27 > \frac {15}{13} = \frac{y_{13}}{y_{12}} \geq \frac{y_{n+1}}{y_n} = \frac{(n+3)}{(n+1)}.
\]
\end{proof}

The claim readily implies that $\frac{1}{8}\frac{\En{d}}{(d-2)!}  < 2^{d-2}\frac{\En{d-2}}{(d-2)!}\frac{\En{d}}{d!}$ for $d \geq 10$, hence the conclusion holds for $d \geq 10$.
The remaining $d$ we check explicitly, using Remark~\ref{remark algorithm}.
\end{proof}

\bibliographystyle{alpha}
\bibliography{refs}

\appendix

\section{Asymptotic behavior of a more general family of matrices}
Our Theorem~\ref{thm Fibonacci} was guided by experiments on a more general family of $(2k+1)\times (2k+1)$-matrices
\[
Q(q, k) = 
\begin{bNiceMatrix}[nullify-dots]
q 	    &	 \Cdots   &   q	 & 1   	  & 0  	&  \Cdots & 0  \\
\Vdots &	 \Iddots  & \Iddots& \Vdots & \Ddots&  \Ddots & \Vdots  \\
q 	    &	 \Iddots  &           &    	  &         &  \Ddots & 0  \\
1	    &	\Cdots  	& 		 & 1   	  &\Cdots&  		   & 1  \\
0	    &	  	       & 		 &\Vdots  &   	&  		   & q  \\
\Vdots &	 \Ddots  	 & \Ddots&    	  &\Iddots&\Iddots   &\Vdots  \\
0	    &	\Cdots  	 & 0		 &    1	  & q  	&\Cdots    & q  \\
\end{bNiceMatrix}.
\]
As in Theorem~\ref{thm Fibonacci}, $[Q(q,k)^d]_{1,1}$ is a polynomial in $q, k$ and, 
motivated by Corollary~\ref{cor Matrix Form}, we would like to compute its leading coefficient
$
\lim_{k \to \infty} \frac{[Q(q, k)^{d+1}]_{(1,1)}}{(2k+1)^d}.
$

First, we need some combinatorial preliminaries. Following Chebikin \cite{Chebikin} we introduce the following definition.

\begin{definition}
The index $i$ is an alternating descent of a permutation $\pi$ if either $i$ is odd and $\pi(i)>\pi(i+1)$, or $i$ is even and $\pi(i)<\pi(i+1)$. 

We will use $A(n,k)$ to denote the number of permutations of $[n]$ having $k$ alternating descents.
\end{definition}

For convenience, we will also define alternating descents for a general chain of inequalities $x_1 < x_2 < \cdots$. We now give a key lemma. 

\begin{lemma}\label{l alternating volume}
Consider a partition of the hypercube $[0, 1]^n$ by the hyperplanes
$x_i + x_{i+1} = 1$ for $i = 1, \ldots, n-1$. 
The total volume of all regions that involve exactly $k$ `$>$' signs is equal to 
$A(n,k)/n!$. 
\end{lemma}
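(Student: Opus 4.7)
The plan is to reduce the volume computation to the standard decomposition of $[0,1]^n$ into $n!$ order simplices by a volume-preserving change of coordinates.

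First, I would introduce new coordinates $y_i = x_i$ when $i$ is odd and $y_i = 1-x_i$ when $i$ is even. This is a volume-preserving involution of $[0,1]^n$, and under it each hyperplane $x_i + x_{i+1} = 1$ becomes $y_i = y_{i+1}$. Crucially, the inequality $x_i + x_{i+1} > 1$ translates to $y_i > y_{i+1}$ when $i$ is odd and to $y_i < y_{i+1}$ when $i$ is even. Thus the number of `$>$' signs in the $x$-description of a region equals, in the $y$-picture, the number of alternating descents of the chain $y_1, y_2, \ldots, y_n$.

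Next, I would further refine the partition by the hyperplanes $y_i = y_j$ for all pairs $i \neq j$, cutting $[0,1]^n$ (up to a set of measure zero) into the $n!$ order simplices $S_\pi = \{y_{\pi(1)} < \cdots < y_{\pi(n)}\}$, each of volume $1/n!$. Inside each $S_\pi$ every comparison $y_i$ vs.\ $y_{i+1}$ is fixed, so $S_\pi$ lies entirely in a single region of the original arrangement. Specifically, $y_i < y_{i+1}$ on $S_\pi$ if and only if $\pi^{-1}(i) < \pi^{-1}(i+1)$, so the number of alternating descents exhibited by $S_\pi$ is exactly the alternating-descent statistic of $\pi^{-1}$ in the sense of Chebikin.

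Since $\pi \mapsto \pi^{-1}$ is a bijection on $S_n$, summing over those $\pi$ for which $\pi^{-1}$ has exactly $k$ alternating descents yields $A(n,k)$ simplices, each of volume $1/n!$; hence the desired volume is $A(n,k)/n!$. The only subtle point is that the statistic that appears naturally on $S_\pi$ is that of $\pi^{-1}$ rather than $\pi$, which is resolved by the inversion bijection on $S_n$; aside from this bookkeeping, every step is routine once the substitution $y_i = x_i$ or $1 - x_i$ has been spotted.
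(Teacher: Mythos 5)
Your proof is correct and takes essentially the same route as the paper: the substitution $y_i = x_i$ or $1-x_i$, followed by the decomposition into the $n!$ order simplices. You are in fact more careful than the paper on one small point, namely that the alternating-descent statistic naturally attached to the order simplex $\{y_{\pi(1)} < \cdots < y_{\pi(n)}\}$ is that of $\pi^{-1}$ rather than of $\pi$; the paper glosses over this, relying implicitly on the fact that inversion is a bijection of $S_n$ and hence does not affect the count $A(n,k)$.
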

\begin{proof}
The affine transformation $y_i = x_i$ for $i$ odd and $y_i = 1 - x_{i}$ 
will swap inequalities at the even place. Hence the number of the original `$>$' is the number of alternating descents on the new variables $y_i$. This transformation does not change the volume. 
Now for each region with the prescribed number of `$>$' corresponds a partial order on the variables $y_i$. We may now triangulate the region by extending the partial order on $y_i$ to a total order.
Each total order then naturally corresponds to a permutation, by 
writing $i_1, \ldots, i_n$ so that $y_{i_1} < y_{i_2} < \cdots$, with the same signature as the original region. The assertion now follows. 
\end{proof}

We will now use that 
\[
[Q(q, k)^{d+1}]_{1,1} = \sum_{1 \leq i_{1}, \ldots, i_{d} \leq k+1}
Q_{1,i_{1}} Q_{i_{1}, i_{2}} \cdots Q_{i_{d-2}, i_{d-1}} Q_{i_{d-1},1} \]
and will plot the indices $(i_1, \ldots, i_{d-1})$ as integer points 
on a cube $[1, 2k + 1]^{d-1}$. The value of this product is thus a function on the cube.
After translating, this becomes a question about integer points 
in the $k$-dilations of certain regions of the cube $[-1, 1]^{d-1}$. 

The value of each $Q_{i_{c}, i_{c+1}}$ is given by simple inequalities depending 
on what region of the matrix $Q$ it is in. Due to the matrix multiplication rules, two consecutive elements may only be in the same or in the adjacent regions, 
i.e., cannot move from the top-left to the bottom-right corner immediately. This motivated the following regions -- for convenience, we will mirror the picture to have more natural signs.  
\begin{center}
\begin{tikzpicture}[scale = 0.5]
\draw (0,0) -- (4,0) -- (4,4) -- (0,4) -- (0,0);
\draw (2,0) -- (0,2) -- (2,4) -- (4,2) -- (2,0);
\draw (2,0) -- (2,4);
\draw (0,2) -- (4,2);
\node at (.6,.6){b};
\node at (3.4,3.4){u};

\node at (1.4,1.4){$c_{-}$};
\node at (1.4,2.6){$c^{-}$};
\node at (2.6,1.4){$c_{+}$};
\node at (2.6,2.6){$c^{+}$};
\end{tikzpicture}
\end{center}
Since we will be working with a dilation of $[-1, 1]^n$, we will 
similarly define regions on each pair of consecutive coordinates $(x_{i}, x_{i+1})$. 

Thus, we may use words in the alphabet $E = \{b, u, c^{+}, c^{-}, c_{+}, c_{-}\}$ to encode regions 
of the cube $[-1,1]^n$ corresponding to nonzero products. 
It is easy to say when a word defines a non-empty region:
\begin{enumerate}
\item $u, c^+, c^-$ can be only followed by $u, c^{+}, c_{+}$
\item $b, c_{+}$ and $c_{-}$ can be only followed by $b, c^{-}, c_{-}$, 
\item the first letter must be either $u, c^+$, or $c^-$ and the last letter must 
be either $u, c^+$, or $c_+$
(this rule corresponds to the fact that $Q_{1,i_{1}}\neq 0$ and $Q_{i_{n}, 1} \neq 0$).
\end{enumerate}
Let $W_n$ denote the set of $n$-letter words on this alphabet. 
We define the signature of the word, $\sigma(w)$, to be the number of occurrences of letters of type $c^{\pm}$ or $c_{\pm}$.

\begin{example}
The word $uc^+$ describes the following region in the cube $[-1, 1]^3$:
$x_1 + x_2 > 1$ (from `u'), $x_2 + x_3 \leq 1$ (from `$c^+$'),
which, after simplifications, is given by 
$x_1 > 1 - x_2 \leq x_3$.
\end{example}

\begin{theorem}
Let $A(n,k)$ be the number of permutations of $[n]$ having $k$ alternating descents.
Then 
\[
\sum_{w \in W_n, \sigma(w) =k} \vol(w) = \frac{1}{(n+1)!} 2^{\max\{0, k-1\}} A(n+1,k).
\]
\end{theorem}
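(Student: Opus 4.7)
The plan is to reduce the volume sum to Lemma~\ref{l alternating volume} via the folding $y_i = |x_i|$ and a sign-vector counting argument.

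First I would split each admissible word $w \in W_n$ into two invariants: the pattern $L(w) \subseteq \{1, \ldots, n\}$ consisting of the positions where $w$ has a $c$-letter, and the sign vector $\epsilon(w) \in \{\pm 1\}^{n+1}$ specifying the signs of $x_1, \ldots, x_{n+1}$ that the letters of $w$ encode. These two invariants determine $w$, and $\sigma(w) = |L(w)|$. The folding map $\phi(x) = (|x_1|, \ldots, |x_{n+1}|)$ is measure-preserving on every sign orthant, so $\mathrm{vol}(R_w) = \mathrm{vol}(R^*_{L(w)})$, where $R^*_L \subseteq [0,1]^{n+1}$ is the region cut out by $y_i + y_{i+1} \leq 1$ for $i \in L$ and $y_i + y_{i+1} > 1$ for $i \notin L$. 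Writing $N(L)$ for the number of sign vectors producing an admissible word of pattern $L$, this gives
\[
\sum_{\substack{w \in W_n \\ \sigma(w) = k}} \mathrm{vol}(w) \;=\; \sum_{\substack{L \subseteq \{1,\ldots,n\} \\ |L| = k}} N(L)\, \mathrm{vol}(R^*_L).
\]

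Next I would compute $N(L)$. Rules (1)--(2) force $\epsilon_i = \epsilon_{i+1}$ at every $i \notin L$, so $\epsilon$ is constant on each connected component of the graph with vertex set $\{1,\ldots,n+1\}$ and edges $\{1,\ldots,n\} \setminus L$. These components form the $k + 1$ contiguous intervals $B_0, B_1, \ldots, B_k$ separated by the elements of $L$, with $1 \in B_0$ and $n+1 \in B_k$. Rule (3), once translated into sign conditions via the alphabet's sign table (matching each letter's super/subscript against the sign of the appropriate coordinate), becomes a pair of positivity constraints on the two extremal coordinates and thereby fixes the signs on $B_0$ and $B_k$. When $k \geq 1$ these are distinct components, so the remaining $k - 1$ components are free, yielding $N(L) = 2^{k-1}$. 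When $k = 0$ the entire set $\{1,\ldots,n+1\}$ is one component containing both endpoints, which is forced to be positive, so $N(L) = 1$. Hence $N(L) = 2^{\max\{0, k-1\}}$ uniformly in $L$.

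Finally, applying Lemma~\ref{l alternating volume} to the partition of $[0,1]^{n+1}$ by the hyperplanes $y_i + y_{i+1} = 1$ gives $\sum_{|L| = k} \mathrm{vol}(R^*_L) = A(n+1, n-k)/(n+1)!$, and the palindromic symmetry $A(n+1, j) = A(n+1, n-j)$ of the alternating descent polynomial~\cite{Chebikin} converts this into $A(n+1, k)/(n+1)!$. Combining these yields the desired identity. The main obstacle I anticipate is the second step: translating rule (3) through the alphabet's sign conventions so that the extremal components $B_0$ and $B_k$ are precisely the ones pinned down is essential for the component count to be uniform in $L$, and getting the correspondence wrong by even one index shifts the constraint into the interior and destroys the clean formula. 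Once that bookkeeping is in place, the component count and the invocation of Lemma~\ref{l alternating volume} are routine.
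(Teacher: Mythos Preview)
Your proposal is correct and follows essentially the same route as the paper's own proof: both fold the cube via $y_i=|x_i|$ to reduce the volume computation to Lemma~\ref{l alternating volume}, and both count the admissible words lying over each fixed $\{g,l\}$-pattern to extract the factor $2^{\max\{0,k-1\}}$. The only cosmetic differences are that you obtain the fiber count in one stroke via the connected-component argument (the paper instead inducts on the word length, appending one letter at a time), and you make explicit the palindromic symmetry $A(n+1,j)=A(n+1,n-j)$ needed to convert the ``$\le$'' count coming from the lemma into the stated ``$k$'' form.
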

\begin{proof}
We want to express the volume of the region given by a word in $W_n$. 
The substitution $x_i' \mapsto -x_i$ does not change the volume of a region
or the permutation defined by the word. 
We define a map $\phi$ from $W_n$ to $n$-letter words on the alphabet $\{g, l\}$ 
by sending $b, u \mapsto g$ and $c^{\pm}, c_{\mp} \to l$. 
Of course, a word on $\{g, l\}$ can be considered as a region given by $[0,1]^n$ by $u, c^+$, in this way $\phi$ does not change the volume.
We may identify to a word in $\{g, l\}^n$ a chain of inequalities 
given by $x_{i} + x_{i+1} > 1$ for $g$ and $x_{i} + x_{i+1} < 1$ for $l$. 
Thus the volume of the regions with $k$ letters $g$ is $A(n,k)/n!$ by Lemma~\ref{l alternating volume}.

Now, given a word $w \in \{g, l\}^n$ with $k$ letters $g$ it remains to show that $|\phi^{-1}(w)| = 2^{\max\{0, k-1\}}$. For $k = 0$, this is trivial as the word $u\ldots u$ is the only mapping to $g\ldots g$. For $k = 1$ and `l' at the $i$th spot, we note that a preceding `g' or the boundary condition, when $i = 1$, 
requires that $x_i \geq 0$ and, similarly, the succeeding letter will force $x_{i+1} \geq 0$.
Thus we must have $c^+$ in the unique lift. 

Now, we may assume that $k \geq 2$ and use induction on the length of the word
to describe the number of pieces. First, if we extend a word $w$ to $wg$, then
any option for $v \in \phi^{-1}(w)$ can be only extended to $vu$ due to the boundary conditions.
Note that this is a valid extension: if $|w| = n$, then $x_n \geq 0$ was a boundary condition which is now required by `u'.
There cannot be other elements in $\phi^{-1}(wg)$ for the same reason. 
Thus $|\phi^{-1}(w)| = |\phi^{-1}(wg)|$ does not change in such a case. 

Second, suppose that we extend a word $w$ of signature $k-1$ and length $n$ to $wl$. 
Suppose $w$ ends with `g' and write it as $w = vg\ldots g$.
Then for any lift $\tilde{v}$ of $v$ we have two lifts of $w$: $\tilde{v}u\ldots uc^{+}$
and $\tilde{v}b \ldots bc^{-}$. Note that all lifts of $w$ have the form $\tilde{v}u\ldots u$
due to the boundary conditions, so we must have $2|\phi^{-1}(w)| = |\phi^{-1}(wl)|$.

In the last case, when $w$ is ending on `l' itself, say, $w = vl$, then 
for any lift of $w$ has the form $\tilde{v}c^{+}$ or $\tilde{v}c^{-}$
due to the boundary condition (the actual sign may depend on $\tilde{v}$). 
However, in $wl$ it is now not affected by the boundary 
condition and, for example, the lift $\tilde{v}c^{-}$
will be extended by both $\tilde{v}c^{-}c^{+}$ and $\tilde{v}c_{-}c^{-}$.
Thus we have $2|\phi^{-1}(w)| = |\phi^{-1}(wl)|$ again. 
\end{proof}

Recall that the alternating Eulerian polynomials are defined by
$A_n(x) = \sum_{k =0}^{n-1} A(n, k) x^k$.
Chebikin \cite{Chebikin} computed the exponential generating function
\[
\sum_{n \geq 1} A_n(x)\frac{z^n}{n!} = \frac{\sec((1-x)z) + \tan((1-x)z)-1}{1 - x\sec((1-x)z) - x\tan((1-x)z)}.
\]

\begin{corollary}
The leading term of $k \mapsto [Q(q, k)^{n+1}]_{1,1}$ is  
$\displaystyle \frac{q^2}{2\,n!}\big(A(n,0)+A_{n}(2q)\big)\,k^n.$
\end{corollary}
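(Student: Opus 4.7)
The plan is to identify the leading coefficient in $k$ of $[Q(q,k)^{n+1}]_{(1,1)}$ with an integral over $[-1,1]^n$ and then evaluate it using the alternating-descent machinery already developed in this appendix. I would first expand $[Q(q,k)^{n+1}]_{(1,1)}=\sum_{i_1,\dots,i_n}\prod_{j=0}^{n}Q_{i_j,i_{j+1}}$ with $i_0=i_{n+1}=1$, and rescale via $x_j=(i_j-(k+1))/k$ so that the sum becomes a Riemann sum whose limit as $k\to\infty$ equals $k^n\cdot L$, where
\[
L \;=\; \int_{[-1,1]^n} Q^*(-1,x_1)\,Q^*(x_1,x_2)\cdots Q^*(x_n,-1)\,dx.
\]
Here $Q^*\colon[-1,1]^2\to\{0,1,q\}$ is the piecewise-constant limit kernel: $Q^*=q$ on the two outer triangles $\{x+y\le -1\}\cup\{x+y\ge 1\}$, $Q^*=1$ on the diamond $|x|+|y|\le 1$, and $Q^*=0$ on the remaining two triangles. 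Since $Q^*(-1,x_1)=q\cdot\mathbf{1}[x_1\le 0]$ almost everywhere (and symmetrically at the other end), this isolates the prefactor $q^2$ in the target and restricts $x_1,x_n\in[-1,0]$.

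Next I would perform the measure-preserving change of variables $x_i=\epsilon_i(1-y_i)$ with $\epsilon_i\in\{\pm 1\}$, $y_i\in[0,1]$; the boundary conditions pin $\epsilon_1=\epsilon_n=-1$. A direct case-check shows that $Q^*(x_i,x_{i+1})$ equals $q$ precisely when $\epsilon_i=\epsilon_{i+1}$ and $y_i+y_{i+1}\le 1$, equals $1$ whenever $y_i+y_{i+1}\ge 1$, and is zero otherwise. Calling the interior edges \emph{$q$-edges} when $y_i+y_{i+1}\le 1$ and \emph{diamond edges} otherwise, the nonzero contributions force $\epsilon$ to be constant across every $q$-edge. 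Fixing a $y$ with exactly $k$ diamond edges (and hence $n-1-k$ $q$-edges), the admissible sign patterns arise from choosing signs on the $k+1$ blocks demarcated by the diamond edges, with the first and last blocks pinned to $-1$; this gives $2^{\max(0,k-1)}$ admissible patterns, each contributing weight $q^{n-1-k}$.

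Finally I would invoke Lemma~\ref{l alternating volume}: the volume of the region in $[0,1]^n$ with exactly $k$ interior edges satisfying $y_i+y_{i+1}>1$ equals $A(n,k)/n!$. Assembling everything yields
\[
L \;=\; \frac{q^2}{n!}\sum_{k=0}^{n-1}A(n,k)\,2^{\max(0,k-1)}\,q^{\,n-1-k},
\]
and the stated closed form $\tfrac{q^2}{2\,n!}\bigl(A(n,0)+A_n(2q)\bigr)$ should then follow by combining the palindromy $A(n,k)=A(n,n-1-k)$ of the alternating Eulerian numbers with the identity $A(n,0)+A_n(2q)=2A(n,0)+\sum_{k\ge 1}A(n,k)\,2^{k}q^{k}$. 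The main obstacle is precisely this last rearrangement: the block-count $2^{\max(0,k-1)}$ degenerates at $k\in\{0,1\}$, and matching the symmetric expression $A(n,0)+A_n(2q)$ requires the palindromy of $A_n$ to interact cleanly with the boundary factor of $2$ created by the pinned outer blocks—this is the step where I would need to be most careful, and it may well force a regrouping of terms (or an auxiliary symmetrization argument swapping the roles of the two sign classes) before the target form emerges.
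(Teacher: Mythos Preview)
Your approach is correct and is in fact a streamlined version of the paper's own argument. The paper encodes the regions of $[-1,1]^n$ by words on the alphabet $\{b,u,c^{\pm},c_{\pm}\}$ and, in the preceding Theorem, collapses them via a map $\phi$ to $\{g,l\}$-words before invoking Lemma~\ref{l alternating volume}; your substitution $x_i=\epsilon_i(1-y_i)$ does the same thing in one step---the signs $\epsilon_i$ are precisely what the superscripts and subscripts on the $c$-letters record, and your diamond/$q$-edge dichotomy is exactly the paper's $l/g$ dichotomy. Both routes arrive at
\[
L \;=\; \frac{q^2}{n!}\sum_{k=0}^{\,n-1} A(n,k)\,2^{\max(0,\,k-1)}\,q^{\,n-1-k}.
\]
The paper's displayed ``proof'' only records the $q=1$ specialization of this identity and does not carry out the $q$-weighted sum at all, so your argument is strictly more complete.

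Your suspicion about the final rearrangement is well placed, but not for the reason you expect: the identity you are trying to prove is false. Summing the display above yields
\[
L \;=\; \frac{q^{\,n+1}}{2\,n!}\Bigl(A(n,0)+A_n\!\bigl(\tfrac{2}{q}\bigr)\Bigr),
\]
which differs from the stated $\frac{q^2}{2n!}\bigl(A(n,0)+A_n(2q)\bigr)$ as soon as $n\ge 3$ and $q\neq 1$. For a concrete check take $n=3$, $q=2$: Theorem~\ref{thm Fibonacci} gives $[T_k^{4}]_{(1,1)}=(2k+1)^3+8\,|(k-1)F_3|$, whose leading coefficient is $8+8\cdot\En{3}/3!=32/3$, in agreement with your formula, whereas the stated expression evaluates to $44/3$. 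So the palindromy manoeuvre you were searching for cannot exist; the statement as printed carries a typo (the argument $2q$ should be $2/q$ and the prefactor $q^{2}$ should be $q^{\,n+1}$), and your derivation already gives the correct leading term.
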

\begin{proof}
\[
\sum_{w \in W_n} \vol(w) = \frac{1}{(n+1)!} \sum_{k =0}^{n} 2^{\max\{0, k-1\}} A(n+1,k)
= \frac{1}{2(n+1)!} (A_{n+1}(2) + A(n+1, 0)).
\]
\end{proof}
\end{document}